
\documentclass[12pt]{amsart}
\usepackage{amsfonts}
\usepackage{amssymb}
\usepackage{enumerate}
\usepackage{graphicx}

\setcounter{MaxMatrixCols}{10}

\makeatletter
\@namedef{subjclassname@2010}{  \textup{2010} Mathematics Subject Classification}
\makeatother
\newtheorem{theorem}{Theorem}[section]
\newtheorem{lemma}[theorem]{Lemma}

\newtheorem{corollary}[theorem]{Corollary}

\newcommand{\thistheoremname}{}
\newtheorem*{genericthm*}{\thistheoremname}
\newenvironment{namedthm*}[1]
  {\renewcommand{\thistheoremname}{#1}
  \begin{genericthm*}}
  {\end{genericthm*}}
\theoremstyle{definition}
\newtheorem{definition}[theorem]{Definition}

\newtheorem{quest}{Question}

\newtheorem*{ack}{Acknowledgement}

\numberwithin{equation}{section}
\frenchspacing
\textheight=21.8cm
\textwidth=13.5cm \topmargin0cm \oddsidemargin1cm \evensidemargin1cm
\footskip1cm

\DeclareMathOperator{\diam}{diam}

\DeclareMathOperator{\co}{co}

\begin{document}
\title[The nonlinear Ryll-Nardzewski theorem]{Around the nonlinear
Ryll-Nardzewski theorem}
\author[A. Wi\'{s}nicki]{Andrzej Wi\'{s}nicki}
\address{Department of Mathematics, Pedagogical University of Krakow,
PL-30-084 Cracow, Poland}
\email{andrzej.wisnicki@up.krakow.pl}
\date{}

\begin{abstract}
Suppose that $Q$ is a weak$^{\ast }$ compact convex subset of a dual Banach
space with the Radon--Nikod\'{y}m property. We show that if $(S,Q)$ is a
nonexpansive and norm-distal dynamical system, then there is a fixed point
of $S$ in $Q$ and the set of fixed points is a nonexpansive retract of $Q.$
As a consequence we obtain a nonlinear extension of the
Bader--Gelander--Monod theorem concerning isometries in $L$-embedded Banach
spaces. A similar statement is proved for weakly compact convex subsets of a
locally convex space, thus giving the nonlinear counterpart of the
Ryll-Nardzewski theorem.
\end{abstract}

\subjclass[2010]{Primary 37B05; Secondary 28D05, 47H10, 54H20}
\keywords{}
\maketitle


\baselineskip=16pt



\section{Introduction}

Fixed point theorems for groups and semigroups of mappings provide a
powerful tool in diverse branches of mathematics. Kakutani-type theorems
have found numerous applications in functional analysis, harmonic analysis
and ergodic theory. Furstenberg's structure theorem and its consequences are
fundamental tools in topological dynamics. The Bruhat--Tits theorem
concerning complete metric spaces satisfying the parallelogram law turns out
to be useful in differential geometry. Kazhdan's property (T) plays a
prominent role in geometric group theory and related fields. Also, the
recent Bader--Gelander--Monod theorem for affine isometries preserving a
bounded set in $L$-embedded Banach spaces has several applications in group
theory and in the theory of operator algebras, including the optimal
solution to the old \textquotedblleft derivation problem\textquotedblright\
in $L^{1}(G).$

One of the first general fixed point theorems for isometries, next to
Kakutani-type theorems, was shown by Brodski\u{\i} and Mil'man in \cite{BrMi}%
--all surjective isometries acting on a weakly compact, convex subset of a
Banach space with normal structure have a common fixed point. In particular,
a group of isometries of a uniformly convex or a uniformly smooth Banach
space with bounded orbits has a fixed point. Since surjective isometries
acting on the whole space are always affine, the last result holds true in
all reflexive spaces as a consequence of the Ryll-Nardzewski theorem, and
still more generally, in all duals of Asplund spaces, thus in all separable
dual spaces provided the isometries are weak$^{\ast }$ continuous. But the
Ryll-Nardzewski theorem goes beyond isometries and concerns a significant
generalization of \textit{isometric} dynamical systems (as defined after
Theorem \ref{ryll}) called \textit{distal} systems. Let us recall its
classical version (see \cite{Ry0, Ry}).

\begin{theorem}[Ryll-Nardzewski]
\label{ryll}Let $Q$ be a nonempty weakly compact convex subset of a locally
convex space $(X,\tau )$ and let $(S,Q)$ be an affine and $\tau $-distal
dynamical system. Then there is a fixed point of $S$ in $Q.$
\end{theorem}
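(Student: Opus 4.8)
The plan is to follow the geometric Namioka--Asplund argument: reduce to a minimal invariant convex set and then derive a contradiction from Namioka's lemma on the existence of weak slices of arbitrarily small seminorm-diameter, this lemma playing the role that total boundedness would play in the norm-compact situation. First, by Zorn's lemma $Q$ contains a minimal element $K$ of the family $\mathcal{F}$ of nonempty weakly closed convex $S$-invariant subsets of $Q$: $\mathcal{F}$ contains $Q$, and, ordered by reverse inclusion, every chain $\{K_{i}\}\subseteq\mathcal{F}$ admits the lower bound $\bigcap_{i}K_{i}$, which is again weakly closed, convex and $S$-invariant and, by the weak compactness of $Q$, nonempty. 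Since each $s\in S$ is affine and weakly continuous, it suffices to prove that $K$ is a single point, which is then a fixed point of every $s\in S$.

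Suppose, for contradiction, that $K$ contains two distinct points $a,b$, and put $z=\tfrac12(a+b)\in K$. Let $C$ be the weakly closed convex hull of the orbit $Sz=\{sz:s\in S\}$. Then $C$ is nonempty, weakly closed, convex and, since each $s$ is affine and $sS\subseteq S$, also $S$-invariant; as $C\subseteq K$, minimality of $K$ forces $C=K$. On the other hand, $\tau$-distality of $(S,Q)$ means that $0$ does not belong to the $\tau$-closure of $\{sa-sb:s\in S\}$, so there are a $\tau$-continuous seminorm $p$ on $X$ and a $\delta>0$ with $p(sa-sb)\geq\delta$ for every $s\in S$; using affineness again, $p(sa-sz)=\tfrac12 p(sa-sb)\geq\tfrac{\delta}{2}$ for all $s\in S$.

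Now apply Namioka's lemma to the weakly compact convex set $K$, the seminorm $p$ and the number $\delta$: there exist $\ell\in X^{\ast}$ and $c\in\mathbb{R}$ such that the slice $V=\{x\in K:\ell(x)>c\}$ is nonempty and $\diam_{p}(V)<\delta/2$. (By passing to the quotient of $X$ modulo $\ker p$ one sees that this reduces to the classical fact that a weakly compact convex subset of a Banach space is dentable.) The set $K\setminus V=\{x\in K:\ell(x)\leq c\}$ is weakly closed and convex, so $Sz\subseteq K\setminus V$ would give $K=C\subseteq K\setminus V$, which is impossible since $V\neq\emptyset$; hence $s_{0}z\in V$ for some $s_{0}\in S$. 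Then $\ell(s_{0}a)+\ell(s_{0}b)=2\ell(s_{0}z)>2c$, so at least one of $s_{0}a,s_{0}b$, say $s_{0}a$, also lies in $V$; but then $s_{0}a$ and $s_{0}z$ both lie in $V$, whence $p(s_{0}a-s_{0}z)\leq\diam_{p}(V)<\delta/2$, contradicting $p(s_{0}a-s_{0}z)\geq\delta/2$. Thus $K$ is a singleton, and the theorem follows.

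The only genuinely substantial ingredient is Namioka's slice lemma; the rest is a routine orchestration of Zorn's lemma, affineness and the interplay of the two topologies. The point requiring a little care is precisely that interplay: weak compactness is what powers the minimal-set argument and the slice lemma, while the seminorm $p$ produced by $\tau$-distality is used only to control chord lengths, and the maps $s$ need be affine only on $Q$, not globally linear.
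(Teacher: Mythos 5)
Your argument is correct, but note that the paper does not prove Theorem \ref{ryll} at all: it is recalled as the classical result with a citation to Ryll-Nardzewski's papers, and the paper's own machinery is aimed at the \emph{nonexpansive} analogues (Theorems \ref{gen} and \ref{loc}). What you have written is essentially the Namioka--Asplund geometric proof: pass to a minimal weakly compact convex $S$-invariant set $K$, use affineness to see that $K$ equals the weakly closed convex hull of the orbit of the midpoint $z=\frac12(a+b)$, produce a weakly open slice of $K$ of small $p$-diameter, and derive a contradiction from $\ell (s_{0}a)+\ell (s_{0}b)=2\ell (s_{0}z)>2c$. All steps check out (including the extraction of a single seminorm $p$ from distality, which implicitly takes the maximum of the finitely many seminorms defining the separating neighbourhood of $0$). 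The one place where you lean on a heavy imported fact is the slice lemma: your reduction via the quotient $X/\ker p$ and its completion is valid, but the dentability of weakly compact convex subsets of Banach spaces is itself a nontrivial theorem (equivalent to such sets having the RNP, which this paper also takes as known), whereas Namioka and Asplund obtain the required small slices by a more elementary Krein--Milman plus Baire category argument after reducing to the separable case. Finally, your proof is genuinely disjoint from the paper's route to its Theorems A and B: every essential step of yours --- the $S$-invariance of $\overline{\co}\,\{sz:s\in S\}$, the identity $sz=\frac12(sa+sb)$, and the inequality on $\ell$ --- uses affineness, which is precisely the hypothesis the paper drops; in exchange the paper must invoke Furstenberg's fixed point theorem, invariant measures, DeMarr's lemma and Lemma \ref{Hsu}. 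Neither statement implies the other, since affine maps need not be nonexpansive and vice versa.
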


By a dynamical system we mean a pair $(S,Q)$, where $S$ is a semigroup, $Q$
a topological space and there is a semigroup action $S\times Q\rightarrow Q$
of $S$ on $Q$ such that the mappings $Q\ni x\rightarrow sx\in Q$ are
continuous (in the topology of $Q$) for each $s\in S.$ If $X$ is a locally
convex space whose topology $\tau $ is determined by a family $\mathcal{N}$
of seminorms on $X$ and $Q\subset X$, a dynamical system $(S,Q)$ is said to
be isometric if $p(sx-sy)=p(x-y)$ for every $p\in \mathcal{N},s\in S$ and $%
x,y\in Q.$ A dynamical system is affine if $Q$ is convex and $s(\alpha
x+(1-\alpha )y)=\alpha sx+(1-\alpha )sy$ for every $\alpha \in \lbrack
0,1],s\in S$ and $x,y\in Q.$ A dynamical system is compact if $Q$ is
compact, and $\tau $-distal (or $\tau $-noncontracting) if
\begin{equation*}
0\notin \overline{\left\{ sx-sy:s\in S\right\} }^{\tau }
\end{equation*}%
for every pair of distinct points $x,y\in Q.$

Ryll-Nardzewski's theorem involves the interplay between the strong and weak
topology, and considerably improves Hahn's and Kakutani's fixed point
theorems. In particular, since isometric systems are distal and $S$ is a
semigroup, it follows that all continuous affine isometries, not necessarily
surjections, defined on a weakly compact convex set $Q$ have a common fixed
point. The question naturally arises as to whether there is a nonlinear
counterpart of this result or, more generally, whether there exists a
nonlinear version of Ryll-Nardzewski's theorem.

Two remarks are in order. Note first that Alspach's example \cite{Al} shows
that there is a fixed-point free isometry on a weakly compact convex subset
of $L_{1}[0,1].$ Thus, the assumption that the mappings $Q\ni x\rightarrow
sx\in Q$ are weakly continuous cannot be relaxed in general. (Another choice
is to put some additional requirements on $Q$ but it is another interesting
story). Secondly, Boyce \cite{Bo} showed an example of two commuting
continuous maps of $[0,1]$ into itself without a common fixed point, and
Huneke \cite{Hu} showed another such example with Lipschitzian mappings.
However, DeMarr \cite{De} was able to prove that a commutative family of
nonexpansive (i.e., $1$-Lipschitz) mappings defined on a convex compact
subset of a Banach space has a common fixed point. Thus a natural
requirement in the nonlinear case is the nonexpansivity of a dynamical
system and the program of studying fixed point properties of semigroups of
nonexpansive mappings has already been performed in the context of amenable
semigroups (see \cite{LaZh2} and references therein).

In this paper we show the nonlinear version of Ryll-Nardzewski's theorem and
some of its generalizations (see \cite{GlMe} for a thorough study of this
problem). If $Q$ is a subset of a locally convex space whose topology $\tau $
is determined by a family $\mathcal{N}$ of seminorms, a dynamical system $%
(S,Q)$ is said to be $\mathcal{N}$-\textit{nonexpansive} (or briefly,
\textit{nonexpansive} if $\mathcal{N}$ is fixed) if $p(sx-sy)\leq p(x-y)$
for every $p\in \mathcal{N},s\in S$ and $x,y\in Q.$ Here is a summary of our
main results.

\begin{namedthm*}{Theorem A}
Let $Q$ be a weak$^{\ast }$ compact convex subset with the Radon--Nikod\'{y}%
m property of a dual Banach space and let $(S,Q)$ be a nonexpansive and
norm-distal dynamical system. Then there is a fixed point of $S$ in $Q$.
Moreover, the set of fixed points is a nonexpansive retract of $Q$.
\end{namedthm*}

In particular, Theorem A is valid if $Q$ is a weakly compact convex subset
of any Banach space or a norm-separable weak$^{\ast }$ compact convex subset
of a dual Banach space. Furthermore, if $Q$ is a weak$^{\ast }$ compact
convex subset of the dual of an Asplund space, we obtain the nonlinear
version of Namioka--Phelps' theorem (see \cite[Theorem 15]{NaPh}). In
locally convex spaces, we have the following counterpart of the
Ryll-Nardzewski Theorem.

\begin{namedthm*}{Theorem B}
Let $Q$ be a nonempty weakly compact convex subset of a locally convex space
$(X,\tau )$ and let $(S,Q)$ be a nonexpansive and $\tau $-distal dynamical
system. Then there is a fixed point of $S$ in $Q.$ Moreover, the set of
fixed points is a nonexpansive retract of $Q.$
\end{namedthm*}

The qualitative part of both Theorems A and B is a consequence of Bruck's
theorem \cite[Theorem 3]{Br}. Next, we have the following nonlinear
extension of the Bader--Gelander--Monod theorem \cite[Theorem A]{BGM}.

\begin{namedthm*}{Theorem C}
Let $X$ be an $L$-embedded Banach space and let $(S,(X,\mathrm{weak}))$ be a
nonexpansive and norm-distal dynamical system. If there is a bounded set $%
A\subset X$ such that $s(A)=A$ for all $s\in S$, then there is a fixed point
of $S$ located in the Chebyshev center of $A.$
\end{namedthm*}

Theorem C follows from the more general Theorem \ref{L-em} in Section 5. In
particular, it holds for a semigroup of weakly continuous isometries
preserving $A$. Note that in \cite{BGM} a similar statement is asserted for
a group of affine isometries.

The organization of the paper is as follows. In Section 2 we collect the
basic tools that we shall use throughout the proofs, including the
fundamental for our purpose Furstenberg's fixed point theorem. In Section 3
we show a general nonlinear Ryll-Nardzewski type theorem in Banach spaces
and its consequences. Section 4 presents the proof of the first part of
Theorem B. In Section 5 we apply the previous results to prove the
qualitative parts of Theorems A, B and an extension of Theorem C to
dynamical systems in $L$-embedded sets. The paper is concluded with a
nonlinear generalization of Fan's theorem \cite[Theorem 1]{Fa} concerning
the orbits of semigroups of linear contractions in the case of reflexive
Banach spaces.

\section{Main tools}

In this section we list the main components that we shall use to prove the
nonlinear Ryll-Nardzewski theorem and its generalizations.

A bounded subset $A$ of a Banach space $X$ is called dentable if for each $%
\epsilon >0$ there is $x\in A$ such that $x\notin \overline{%
\co%
}(A\setminus B(x,\varepsilon ))$. We recall the following characterization
of a set with the Radon--Nikod\'{y}m property (RNP for short): a bounded
closed convex set $C\subset X$ has the RNP iff every bounded nonempty subset
of $C$ is dentable. A Banach space $X$ is said to have the RNP if its unit
ball has the RNP. It is well known that any weakly compact convex subset of
a Banach space has the RNP as well as any norm-separable weak$^{\ast }$
compact convex subset of a dual space. Moreover, a Banach space $X$ is
Asplund iff $X^{\ast }$ has the RNP.

The notion of dentability is closely related with the concept of
fragmentability, invented by Jayne and Rogers \cite{JaRo}, that is crucial
for the geometric approach to Ryll-Nardzewski's theorem and its
generalizations. The related ideas go back to the works of Glasner \cite{Gl}%
, Hansel and Troallic \cite{HaTr}, Namioka and Phelps \cite{NaPh}, Veech
\cite{Ve}, and culminate in a very general Lemma 1.2 of \cite{GlMe} allowing
one to lift the $\tau $-distality of $(S,Q)$ to the original, usually weaker
topology. Let $(X,\omega )$ be a topological space and let $\rho $ be a
metric on $X$. We say that $(X,\omega )$ is $\rho $-fragmented if for every $%
\varepsilon >0$ and a nonempty set $A\subset X$ there is an $\omega $-open
set $U$ in $X$ such that $U\cap A\neq \emptyset $ and $\rho $-$%
\diam%
(U\cap A)<\varepsilon $. A rather straightforward application of the Baire
category theorem shows that a compact space $(X,\omega )$ is $\rho $%
-fragmented iff for each $\omega $-closed subset $A$ of $X$, the identity
map $\mathrm{id}:(A,\omega )\rightarrow (A,\rho )$ has a point of
continuity. Thus every weak$^{\ast }$ compact convex set with the RNP is
norm-fragmented (see, e.g., \cite[Theorem 4.2.13]{Bou}).

The fundamental tool for our results is the following consequence of
Furstenberg's structure theorem \cite{Fu}, extended from metrizable to
arbitrary compact affine systems by Namioka (see \cite[Theorem 4.1]{Na1},
\cite[Theorem 4.1]{Na4}).

\begin{theorem}[Furstenberg's fixed point theorem]
\label{Furst}Let $(S,Q)$ be a compact affine dynamical system. Suppose there
exists a nonempty compact $S$-invariant subset $K$ (i.e., $s(K)\subset K$
for each $s\in S$) of $Q$ such that $(S,K)$ is distal. Then there is a
common fixed point of $S$ in $Q.$
\end{theorem}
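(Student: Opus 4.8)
The plan is to reduce the statement to the existence of an $S$-invariant probability measure on a suitable subflow of $K$, and then to convert such a measure into a fixed point by passing to its barycenter. Throughout, $Q$ is a compact convex subset of a locally convex space $X$, as the hypotheses implicitly require. First, a routine Zorn's lemma argument (the intersection of a chain of nonempty compact $S$-invariant subsets of $K$ is again nonempty, compact and $S$-invariant) produces a minimal nonempty compact $S$-invariant set $K_{0}\subseteq K$; since the $\tau$-distality condition obviously passes to subsets, $(S,K_{0})$ is a minimal distal flow. Suppose for the moment that we have found a Radon probability measure $\mu$ on $K_{0}$ with $s_{\ast}\mu=\mu$ for every $s\in S$. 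Viewing $\mu$ as a measure on $Q$ via $K_{0}\subseteq Q$, let $x_{0}=\int_{Q}x\,d\mu(x)$ be its barycenter; it exists and lies in $Q$ because $Q$ is compact and convex, and in fact $x_{0}\in\overline{\co}(K_{0})\subseteq Q$. For every $s\in S$ the map $x\mapsto sx$ is affine and continuous on $Q$, hence commutes with the barycenter, so $sx_{0}=\int_{Q}sx\,d\mu(x)=\int_{Q}y\,d(s_{\ast}\mu)(y)=\int_{Q}y\,d\mu(y)=x_{0}$. Thus $x_{0}$ is a common fixed point of $S$ in $Q$.

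It remains to produce the invariant measure $\mu$, and this is where Furstenberg's structure theorem \cite{Fu} (in Namioka's form \cite{Na1}) enters; it is the heart of the matter. (If one is wary of $S$ being merely a semigroup, one may first replace $S$ by the enveloping semigroup $E=E(S,K_{0})\subseteq K_{0}^{K_{0}}$: distality of $(S,K_{0})$ forces $E$ to be a group of homeomorphisms of $K_{0}$, the flow $(E,K_{0})$ is again compact, minimal and distal, and every $E$-invariant measure is $S$-invariant, so one is reduced to a group action.) By the structure theorem, $(S,K_{0})$ sits at the top of a transfinite tower of flows $(S,X_{\xi})_{\xi\le\eta}$ with equivariant connecting maps, where $X_{0}$ is a one-point flow, each $X_{\xi+1}\to X_{\xi}$ is an isometric (equicontinuous) extension, and $X_{\lambda}=\varprojlim_{\xi<\lambda}X_{\xi}$ at limit ordinals. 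One builds a coherent family of invariant probability measures $\mu_{\xi}$ on $X_{\xi}$ by transfinite recursion: $\mu_{0}$ is the point mass; at a successor ordinal one integrates $\mu_{\xi}$ against the canonical $S$-equivariant field of fibre measures carried by the isometric extension $X_{\xi+1}\to X_{\xi}$ (its fibres are homogeneous spaces of the structure group, each equipped with its unique invariant probability measure, and these vary measurably over the base); at a limit ordinal one takes the inverse-limit measure $\mu_{\lambda}$, which exists because an inverse system of compact Hausdorff spaces equipped with a coherent family of Radon probability measures always admits such a limit. Then $\mu=\mu_{\eta}$ is an $S$-invariant Radon probability measure on $X_{\eta}=K_{0}$, as needed.

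The entire difficulty is concentrated in the construction of $\mu$: the structure theorem for distal flows is deep, and Namioka's passage from Furstenberg's metrizable case to arbitrary compact flows is substantial --- it rests on the theory of compact right-topological groups and on measurable selection arguments needed to carry out the fibrewise averaging without metrizability. An alternative route, also due to Namioka, sidesteps the structure theorem: the enveloping group of a minimal distal flow is a compact right-topological group carrying a left-invariant (Haar) probability measure $m$, and the pushforward of $m$ under an evaluation map $E\to K_{0}$, $e\mapsto ex_{0}$, is directly an $S$-invariant measure on $K_{0}$; but producing the Haar measure $m$ is of comparable depth. Granting any of these inputs, the remaining pieces --- the Zorn reduction and the barycenter computation --- are routine.
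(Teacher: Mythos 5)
The paper does not prove this statement: it is quoted as a known theorem, with the proof deferred to Namioka \cite[Theorem 4.1]{Na1}, \cite[Theorem 4.1]{Na4}. Your outline reconstructs the standard argument from that literature --- pass to a minimal compact $S$-invariant subset $K_{0}$ of $K$ (distality is trivially inherited by subsets), produce an $S$-invariant Radon probability measure on $K_{0}$, and take its barycenter, which lies in $\overline{\co}(K_{0})\subseteq Q$ and is fixed because each $s$ is affine and continuous on $Q$ and hence commutes with barycenters. That reduction and the barycenter computation are correct and routine, and you are right that the entire difficulty sits in the construction of the invariant measure; both routes you sketch (the transfinite tower of isometric extensions with fibrewise averaging, or Haar measure on the enveloping group pushed forward by an evaluation map) are the accepted ones, and black-boxing them with the correct attributions is appropriate. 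One inaccuracy worth fixing: distality makes the enveloping semigroup $E(S,K_{0})$ a \emph{group of bijections} of $K_{0}$, not of homeomorphisms --- its elements are pointwise limits of the acting maps and are in general discontinuous, so $(E,K_{0})$ is only a right-topological action. This is precisely why Namioka's extension beyond the metrizable case is phrased in terms of compact right-topological groups and their Haar measures, and it means your parenthetical ``reduction to a group action'' cannot be fed naively into a structure theorem stated for jointly continuous flows. With that caveat, and granting the cited deep inputs, the proposal is a correct proof sketch.
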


It follows from Theorem \ref{Furst} that every distal compact dynamical
system admits an invariant Radon probability measure. The Radon-Nikodym
property implies that every Radon measure on a weak$^{\ast }$ compact convex
set with the RNP has norm-separable support (see, e.g., \cite[Theorem 4.3.11]%
{Bou}).

The next component is the following observation of DeMarr \cite[Lemma 1]{De}
that also follows from the characterization of normal structure in \cite%
{BrMi}. Recall that a point $x$ of a bounded set $A\subset X$ is called
\textit{diametral} if $\sup_{y\in A}\left\Vert x-y\right\Vert =%
\diam%
A.$ A convex set $K\subset X$ is said to have \textit{normal structur}e if
each bounded, convex subset $A$ of $K$ with $%
\diam%
A>0$ contains a nondiametral point.

\begin{lemma}
\label{DeMarr}Let $X$ be a Banach space and let $K$ be a nonempty compact
subset of $X.$ Then there exists $u\in \overline{%
\co%
}K$ such that $\sup \{\left\Vert x-u\right\Vert :x\in K\}<%
\diam%
K$ provided $%
\diam%
K>0.$
\end{lemma}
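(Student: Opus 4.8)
The plan is to exploit compactness of $K$ to replace it by a finite set and then take a suitable average as the center $u$. Write $d=\diam K>0$ and fix any $\varepsilon$ with $0<\varepsilon<d$. Since $K$ is compact, it admits a finite $\varepsilon$-net: there are points $x_{1},\dots ,x_{n}\in K$ such that for every $x\in K$ we have $\left\Vert x-x_{j}\right\Vert <\varepsilon$ for at least one index $j\in\{1,\dots,n\}$. Set
\begin{equation*}
u=\frac{1}{n}\sum_{i=1}^{n}x_{i}\in \co K\subset \overline{\co }K.
\end{equation*}

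Next I would estimate $\left\Vert x-u\right\Vert$ uniformly over $x\in K$. Fix $x\in K$ and choose $j$ with $\left\Vert x-x_{j}\right\Vert <\varepsilon$; since $x,x_{i}\in K$ we also have $\left\Vert x-x_{i}\right\Vert \leq d$ for every $i$. Then by the triangle inequality applied to the average,
\begin{equation*}
\left\Vert x-u\right\Vert =\left\Vert \frac{1}{n}\sum_{i=1}^{n}(x-x_{i})\right\Vert \leq \frac{1}{n}\sum_{i=1}^{n}\left\Vert x-x_{i}\right\Vert \leq \frac{1}{n}\bigl((n-1)d+\varepsilon \bigr)=d-\frac{d-\varepsilon }{n}.
\end{equation*}
The right-hand side does not depend on $x$, so $\sup \{\left\Vert x-u\right\Vert :x\in K\}\leq d-\frac{d-\varepsilon }{n}$, and since $\varepsilon <d$ this quantity is strictly less than $d=\diam K$, which is exactly the assertion.

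There is essentially no serious obstacle here: the only point that needs a moment's care is that the bound be \emph{uniform} in $x$, but this is automatic because the worst-case estimate $d-\frac{d-\varepsilon}{n}$ is obtained by throwing away all metric information except "one point of the net is $\varepsilon$-close and the rest are at most $d$ away," which holds for every $x\in K$ simultaneously. (When $n=1$ the same formula gives $\sup_{x\in K}\left\Vert x-x_{1}\right\Vert\le\varepsilon<d$, so no case distinction is needed.) If one wishes, the dependence on $\varepsilon$ can be removed at the end by simply fixing, say, $\varepsilon=d/2$, yielding $\sup_{x\in K}\left\Vert x-u\right\Vert\le d\bigl(1-\tfrac{1}{2n}\bigr)$.
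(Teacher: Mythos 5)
Your proof is correct and complete: the finite $\varepsilon$-net plus averaging yields the uniform bound $\sup_{x\in K}\left\Vert x-u\right\Vert \leq d-\frac{d-\varepsilon}{n}<d$, and each step (existence of the net by compactness, the triangle-inequality estimate with one summand below $\varepsilon$ and the rest at most $d$, and the uniformity in $x$) is valid. The paper gives no proof of this lemma, citing it as DeMarr's Lemma 1, and your $\varepsilon$-net averaging argument is precisely the standard proof of that result, so there is no divergence to report.
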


It follows that every compact convex subset of $X$ has normal structure.
Note that the result remains unchanged if we replace $X$ by any locally
convex space and a norm by a continuous seminorm.

The link between the weak, weak$^{\ast }$ and norm compactness is given by
the method developed in the nonlinear case by Hsu, To-Ming Lau and Takahashi
(see \cite{Hs}, \cite[Lemma 5.2]{LaTa}), and Bartoszek \cite[Lemma 1]{Ba}.
We reformulate the result of Hsu in the case of locally convex spaces. Let $%
X $ be a locally convex space whose topology $\tau $ is determined by a
family $\mathcal{N}$ of seminorms on $X$ and let $K\subset X$. We say that $%
K $ is a minimal weakly compact $S$-invariant subset of $X$ if there is no
proper (nonempty) weakly compact $S$-invariant set $K_{0}\subsetneq K.$

\begin{lemma}
\label{Hsu}Let $(S,K)$ be an $\mathcal{N}$-nonexpansive dynamical system,
where $K$ is a minimal weakly compact $S$-invariant and $\tau $-separable
subset of a locally convex space $(X,\tau )$ such that $s(K)=K$ for each $%
s\in S.$ Then $K$ is $\tau $-totally bounded.
\end{lemma}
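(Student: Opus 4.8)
The plan is to fix a seminorm $p\in\mathcal N$ and show that $K$ is totally bounded with respect to $p$; since $\tau$ is generated by $\mathcal N$, doing this for every $p$ yields the assertion. Put $d_p(x,y)=p(x-y)$. I would first note that $(K,d_p)$ is complete: if $(x_n)$ is $d_p$-Cauchy in $K$, weak compactness gives a weak cluster point $x\in K$, and weak lower semicontinuity of $p(x_n-\cdot)$ yields $p(x_n-x)\le\limsup_m p(x_n-x_m)\to 0$. Hence $p$-total boundedness of $K$ is the same as $d_p$-compactness of $K$, and I would argue by contradiction, assuming there are $\varepsilon>0$ and a sequence $(a_n)$ in $K$ with $p(a_n-a_m)\ge\varepsilon$ for all $n\ne m$.

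Next I would record the structural consequences of minimality and of $s(K)=K$. Since every weakly compact $S$-invariant subset of $K$ equals $K$, each orbit $\{sx:s\in S\}$ is weakly dense; in particular the system is weakly minimal. Moreover $f(x):=\sup_{y\in K}p(x-y)$ is weakly lower semicontinuous (a supremum of the weakly lower semicontinuous maps $p(x-\cdot)$), and it is non-increasing along the action exactly because $s(K)=K$: writing $y\in K$ as $y=sy'$ gives $p(sx-y)=p(sx-sy')\le p(x-y')$, hence $f(sx)\le f(x)$. Therefore $\{x\in K:f(x)=\min_K f\}$ is nonempty, weakly closed and $S$-invariant, so it equals $K$ by minimality; that is, $f\equiv\diam_p K$ and every point of $K$ is $p$-diametral. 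Applying the same principle inside the weakly compact convex set $A:=\overline{\co}^{\tau}K$ (weakly compact by a theorem of Krein, or by Alaoglu's theorem in the dual setting), together with the identity $\alpha_p(\overline{\co}^{\tau}B)=\alpha_p(B)$ for the Kuratowski measure of $p$-noncompactness and the fact that the $\tau$-closed convex hull of the weakly dense orbit is $A$, one also obtains $\alpha_p(\{sx:s\in S\})=\alpha_p(A)=\alpha_p(K)=:\alpha$ for every $x\in K$; under our assumption $\alpha>0$, and a one-line triangle estimate bounds the $p$-asymptotic radius $\inf_{z\in A}\limsup_n p(z-a_n)$ of $(a_n)$ below by $\varepsilon/2$.

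The decisive step is to play this quantitative information off against the minimality of $K$. I would form the asymptotic-centre function $\varphi(z)=\limsup_n p(z-a_n)$ on $A$; it is convex and $1$-Lipschitz with respect to $p$, hence $\tau$-continuous and convex, hence weakly lower semicontinuous, so it attains its infimum $r^{\ast}\ge\varepsilon/2>0$ on a nonempty weakly compact convex set $Z\subset A$. Using $s(K)=K$ one then chooses, for every $s\in S$, preimages $b^s_n\in K$ with $sb^s_n=a_n$; these are again $\varepsilon$-separated because $s$ is $p$-nonexpansive, and one transports the associated centres along the action --- here weak continuity of the maps is what keeps weak limits of orbits inside $K$, and the constancy of $f$ is what controls the resulting slices through a DeMarr-type nondiametrality argument --- so as to contradict $r^{\ast}>0$. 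Making this last manoeuvre rigorous is the heart of the proof and the step I expect to cost the most work; it is precisely where the two hypotheses with no counterpart for a single nonexpansive map, namely weak continuity of the action and the surjectivity $s(K)=K$, become indispensable. This final argument is due to Hsu \cite{Hs}, to Lau and Takahashi \cite[Lemma 5.2]{LaTa} and to Bartoszek \cite[Lemma 1]{Ba}, and I would follow it.
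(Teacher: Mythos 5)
Your write-up stops exactly where the lemma begins. Everything up to the ``decisive step'' is preparatory bookkeeping (reduction to one seminorm, completeness of $(K,d_p)$, constancy of $f$ on $K$), and the step that would actually produce the contradiction is explicitly deferred: you say it is ``the heart of the proof,'' that making it rigorous ``is the step I expect to cost the most work,'' and that you ``would follow'' the literature. That is not a proof of the lemma; it is an announcement that one exists. Moreover, the route you sketch runs into a structural obstruction that is precisely the difficulty of the nonlinear setting: the set $A=\overline{\co}^{\tau}K$ is \emph{not} $S$-invariant, because the maps $s$ are nonexpansive but not affine, so neither the asymptotic-centre minimizing set $Z\subset A$ nor any DeMarr-type nondiametral point $u\in\overline{\co}K$ can be played against the minimality hypothesis --- which here is minimality of $K$ among weakly compact $S$-invariant sets, with no convexity anywhere. (The paper does use the DeMarr/Chebyshev-radius argument, but only in Theorems \ref{gen} and \ref{loc}, and only against a set $Q$ that is assumed minimal among \emph{convex} weakly compact invariant sets; that hypothesis is not available in Lemma \ref{Hsu}.) Your observation that every point of $K$ is $p$-diametral is correct but leads nowhere by itself, since $K$ need not be convex and the nondiametral point supplied by Lemma \ref{DeMarr} lives outside $K$.

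The paper's actual argument is entirely different and much more elementary: it is a Baire-category covering argument. Fix a basic $\tau$-neighbourhood $U$ of $0$ and a convex $\tau$-closed $V$ with $V-V\subset U$; $\tau$-separability covers $K$ by countably many weakly closed translates $x_n+V$, so Baire's theorem on the compact space $(K,\mathrm{weak})$ produces a weakly open $W$ and $z\in K$ with $(z+W)\cap K\subset(z+U)\cap K$. A second countable cover of $K$ by $\tau$-small pieces $(y_n+U')\cap K$ is then pushed, one piece at a time, into $z+W$ by composing elements of $S$ --- this uses weak density of orbits (minimality) to steer $s_n\cdots s_1y_n$ into $z+W'$ and nonexpansivity to keep the whole piece $\tau$-small. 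Weak compactness extracts a finite subcover $K\subset\bigcup_{n\le p}(s_n\cdots s_1)^{-1}(z+W)$, and finally $s(K)=K$ together with nonexpansivity transports this back to a cover of $K$ by finitely many translates of $U$. If you want to salvage your approach, you would need to supply the missing manoeuvre in a form that never leaves $K$ and never invokes invariance of a convex hull; as written, there is a genuine gap.
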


\begin{proof}
Let $U=\{x\in X:p_{1}(x)<\varepsilon ,...,p_{n}(x)<\varepsilon
\},p_{1},...,p_{n}\in \mathcal{N},\varepsilon >0,$ be a $\tau $-open
neighbourhood of $0$ and take a convex $\tau $-closed neighbourhood $V$ of $%
0 $ such that $V-V\subset U.$ Since $K$ is $\tau $-separable, there exists a
sequence $\{x_{n}\}\subset K$ such that $K\subset \bigcup_{n=1}^{\infty
}(x_{n}+V).$ Since $(K,\mathrm{weak})$ is a Baire space and each translate
of $V$ is weakly closed, there is a weakly open neighbourhood $W$ of $0$ and
$z,x_{i}\in K$ such that%
\begin{equation*}
(z+W)\cap K\subset (x_{i}+V)\cap K\subset (z+U)\cap K.
\end{equation*}%
Take a weakly open neighbourhood $W^{\prime }$ of $0$ such that $W^{\prime
}+W^{\prime }\subset W$ and a $\tau $-open neighbourhood of $0$, $U^{\prime
}=\{x\in X:q_{1}(x)<\varepsilon ^{\prime },...,q_{m}(x)<\varepsilon ^{\prime
}\}$, $q_{1},...,q_{m}\in \mathcal{N},\varepsilon ^{\prime }>0$, such that $%
U^{\prime }\subset W^{\prime }$. By $\tau $-separability, there is a
sequence $\{y_{n}\}\subset K$ such that $K\subset \bigcup_{n=1}^{\infty
}(y_{n}+U^{\prime }).$ Since $K$ is minimal $S$-invariant, $\{sy:s\in S\}$
is weakly dense in $K$ for each $y\in K,$ and hence we can choose by
induction a sequence $\{s_{n}\}\subset S$ such that $%
s_{1}y_{1},s_{2}s_{1}y_{2},...,s_{n}s_{n-1}...s_{1}y_{n},...\in z+W^{\prime
}.$ It follows from $\mathcal{N}$-nonexpansivity that%
\begin{equation*}
s_{n}s_{n-1}...s_{1}((y_{n}+U^{\prime })\cap K)\subset
s_{n}s_{n-1}...s_{1}y_{n}+U^{\prime }\subset z+W
\end{equation*}%
for each $n,$ and thus $K\subset \bigcup_{n=1}^{\infty
}(s_{n}s_{n-1}...s_{1})^{-1}(z+W).$ Since $K$ is weakly compact, there
exists a finite subcover $K\subset
\bigcup_{n=1}^{p}(s_{n}s_{n-1}...s_{1})^{-1}(z+W).$ Now we have\
\begin{eqnarray*}
K &=&s_{p}s_{p-1}...s_{1}(K)\subset
\bigcup_{n=2}^{p}s_{p}s_{p-1}...s_{n}((z+W)\cap K)\cup ((z+W)\cap K) \\
&\subset &\bigcup_{n=2}^{p}s_{p}s_{p-1}...s_{n}((z+U)\cap K)\cup ((z+U)\cap
K)
\end{eqnarray*}%
and from the nonexpansivity, $K\subset
\bigcup_{n=2}^{p}(s_{p}s_{p-1}...s_{n}z+U)\cup (z+U),$ that is, $K$ is
totally bounded.
\end{proof}

Note that in Banach spaces the above argument works when $K$ is weak$^{\ast
} $ compact too (see \cite[Lemma 5.2]{LaTa}). Another approach to this
problem, adapted in Section 3, was given by Bartoszek \cite[Lemma 1]{Ba}.

In what follows, we show how the results described in this section interact
with one another and with some classical arguments in this field to prove
nonlinear Ryll-Nardzewski type theorems.

\section{Fixed points of distal systems and the Radon-Nikodym property}

In this section we prove a general nonlinear fixed point theorem of
Ryll-Nardzewski type in Banach spaces.

\begin{theorem}
\label{gen}Let $Q$ be a weak$^{\ast }$ compact convex subset with the
Radon-Nikod\'{y}m property of a dual Banach space $X$ and let $(S,Q)$ be a
nonexpansive and norm-distal dynamical system. Then there is a fixed point
of $S$ in $Q$.
\end{theorem}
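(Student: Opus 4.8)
The plan is to run a Zorn's lemma argument to reduce to a minimal $S$-invariant weak$^{\ast}$ compact convex subset, then use the fragmentability coming from the RNP to extract a nonempty norm-compact $S$-invariant piece, and finally feed this into Furstenberg's fixed point theorem. First, by Zorn's lemma, take a minimal nonempty weak$^{\ast}$ compact convex $S$-invariant set $Q_{0}\subset Q$; minimality and weak$^{\ast}$-continuity of the maps $x\mapsto sx$ give that $\overline{\operatorname{co}}^{w^{\ast}}\{sx:s\in S\}=Q_{0}$ for every $x\in Q_{0}$. Since $Q$ has the RNP, $(Q_{0},w^{\ast})$ is norm-fragmented, so by the Baire category reformulation quoted in Section 2 the identity map $(Q_{0},w^{\ast})\to(Q_{0},\|\cdot\|)$ has a point of continuity. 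The next step is to promote this single point of norm-continuity into a nonempty norm-compact $S$-invariant subset $K$ of $Q_{0}$. Here I would invoke the Hsu--Lau--Takahashi technique (the weak$^{\ast}$ analogue of Lemma \ref{Hsu}, as noted in the remark after its proof, combined with Bartoszek's approach): one shows that an $S$-minimal weak$^{\ast}$ compact set that is norm-separable must in fact be norm-totally-bounded, hence norm-compact. Norm-separability of a suitable minimal invariant subset is exactly what fragmentability buys: a point of norm-continuity of the identity on $Q_{0}$ lets one find a relatively norm-small weak$^{\ast}$-open slice, and iterating the minimality/nonexpansivity pushes this smallness everywhere, yielding a norm-separable (then norm-totally-bounded, then norm-compact) minimal invariant set $K$.

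Once $K\subset Q_{0}\subset Q$ is a nonempty norm-compact $S$-invariant set, the dynamical system $(S,K)$ is automatically norm-distal (being a subsystem of the norm-distal system $(S,Q)$), and on a norm-compact set norm-distality is the usual topological distality. Then Furstenberg's fixed point theorem (Theorem \ref{Furst}), applied to the compact affine system $(S,Q)$ with its weak$^{\ast}$ topology and the compact $S$-invariant distal subset $K$, produces a common fixed point of $S$ in $Q$. (One needs $(Q,w^{\ast})$ to be a compact affine system, which it is, and one should check that distality of $(S,K)$ in the norm implies distality in the weak$^{\ast}$ topology on the norm-compact $K$, which is immediate since the two topologies agree there — or rather, norm-distality is the stronger hypothesis.)

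I expect the main obstacle to be the middle step: upgrading "there is a point of norm-continuity of $\mathrm{id}\colon(Q_{0},w^{\ast})\to(Q_{0},\|\cdot\|)$'' to "there is a nonempty norm-compact $S$-invariant subset.'' The subtlety is that one cannot simply take the norm-closure of a weak$^{\ast}$-dense orbit; instead one must do a second Zorn's lemma reduction \emph{inside} $Q_{0}$ to a minimal weak$^{\ast}$ compact $S$-invariant (not necessarily convex) set $K$, argue that fragmentability forces $K$ to be norm-separable, and then apply the Hsu-type total-boundedness lemma in its weak$^{\ast}$ form. Verifying that the nonexpansivity hypothesis is strong enough to make the Hsu/Bartoszek inductive covering argument go through in the weak$^{\ast}$ setting — in particular handling the interplay of weak$^{\ast}$-open and norm-open neighbourhoods analogously to the proof of Lemma \ref{Hsu} — is where the real work lies. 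The qualitative conclusion (existence of a fixed point) then follows; the retraction statement in Theorem A is deferred, since the excerpt notes it comes separately from Bruck's theorem.
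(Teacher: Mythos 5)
Your outline breaks down at the final step: Furstenberg's fixed point theorem (Theorem \ref{Furst}) applies only to compact \emph{affine} dynamical systems, and $(S,Q)$ is merely nonexpansive, not affine --- that is the whole difficulty the theorem is meant to overcome. You cannot feed the norm-compact distal subset $K$ into Theorem \ref{Furst} with $(S,Q)$ as the ambient system and read off a fixed point. What the paper actually does is first lift norm-distality to weak$^{\ast}$-distality of $(S,K)$ via fragmentability, then apply Furstenberg to the \emph{induced affine} system $(S,P(K))$ on the space of Radon probability measures; this yields only an $S$-invariant measure $\mu$ on $K$, not a fixed point. The fixed point is then obtained by a completely different mechanism: Bartoszek's counting argument (balls of radius $\varepsilon$ around orbit points all have $\mu$-measure $\geq\delta$, so the orbit is totally bounded) shows $K$ is norm-compact, and DeMarr's lemma (Lemma \ref{DeMarr}, i.e.\ normal structure) combined with minimality of the convex set $Q$ forces $\diam K=0$.

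Your middle step is also not self-supporting. The Hsu/Bartoszek total-boundedness arguments require $s(K)=K$ for all $s\in S$ (surjectivity on the minimal set), which does not follow from minimality alone for a general semigroup; in the paper it is extracted from the invariant measure by the support argument $K=\supp(\mu)$, $\mu(s(K_{0}))=1$. Moreover, ``fragmentability pushes smallness everywhere by minimality'' does not work as stated: fragmentability gives a weak$^{\ast}$-open slice $U$ with small norm-diameter and minimality gives $K=\bigcup_{s}s^{-1}(U\cap K)$, but nonexpansivity bounds the diameter of \emph{images}, not preimages, so this cover says nothing about total boundedness or norm-separability of $K$. Both obstructions are resolved in the paper precisely by routing everything through the invariant measure, which your plan postpones until after the point where it is needed.
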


\begin{proof}
By Kuratowski--Zorn's lemma, we can assume without loss of generality that $%
Q $ is a minimal $S$-invariant weak$^{\ast }$ compact convex subset with the
RNP of $X$. Let $K$ be a minimal $S$-invariant weak$^{\ast }$ compact subset
of $Q.$ We first prove that $(S,K)$ is weak$^{\ast }$-distal. A trick is to
`lift' the distality, using fragmentability. A general result of this type,
inspired by \cite[Proposition 2]{HaTr}, is shown in \cite[Lemma 1.2]{GlMe}.
We present this argument in the case of norm and weak$^{\ast }$ topologies.
Fix $x,y\in K.$ By norm-distality, there is $\varepsilon >0$ such that
\begin{equation}
\left\Vert sx-sy\right\Vert >\varepsilon  \label{distal}
\end{equation}%
for every $s\in S.$ Suppose conversely that $S$ is not weak$^{\ast }$-distal
on $K$, i.e., there are nets $(s_{\alpha }x),(s_{\alpha }y)$ such that $%
w^{\ast }$-$\lim s_{\alpha }x=w^{\ast }$-$\lim s_{\alpha }y=u$ for some $%
u\in K.$ Notice that $K=\overline{\{su:s\in S\}}^{\mathrm{weak}^{\ast }}$ by
the minimality of $K.$ Since $K$ is norm-fragmented as a subset of a weak$%
^{\ast }$ compact convex set $Q$ with the RNP, there is a weak$^{\ast }$%
-open set $U$ in $Q$ such that $U\cap K\neq \emptyset $ and $%
\diam%
(U\cap K)<\varepsilon $. Hence there exists $s_{0}\in S$ such that $%
p=s_{0}u\in U\cap K.$ Then%
\begin{equation*}
w^{\ast }\text{-}\lim s_{0}s_{\alpha }x=p=w^{\ast }\text{-}\lim
s_{0}s_{\alpha }y
\end{equation*}%
and thus $s_{0}s_{\alpha }x,s_{0}s_{\alpha }y\in U\cap K$ eventually. But
this contradicts (\ref{distal}) and $S$ is weak$^{\ast }$-distal on $K.$

Let $C(K)$ denote the space of continuous functions on $K$ and define $%
(f\cdot s)(t)=f(st)$ for $f\in C(K)$ and $s,t\in S.$ Let $T_{s}f=f\cdot s$
for each $s\in S.$ Notice that $(S,P(K))$ is an affine dynamical system with
the action $s\cdot \mu =T_{s}^{\ast }(\mu ),$ where $P(K)$ is the convex weak%
$^{\ast }$-compact set of all means on $C(K)$ (i.e., Radon probability
measures on $K$ with the weak$^{\ast }$ topology) and $T_{s}^{\ast
}:C(K)^{\ast }\rightarrow C(K)^{\ast }$ is the adjoint of $T_{s}.$ Let $\phi
:K\rightarrow P(K)$ be the natural embedding of $K$ into $P(K)$ defined by $%
\phi (x)(f)=f(x).$ Then $\phi $ is an isomorphism of systems $(S,K)$ and $%
(S,\phi (K)).$ Thus $(S,\phi (K))$ is weak$^{\ast }$ distal since $(S,K)$ is
and, by Theorem \ref{Furst}, there is a fixed point $\mu $ of $S$ in $P(K)$,
that is, $\mu $ is an $S$-invariant Radon probability measure on $K$ (with
respect to weak$^{\ast }$ topology).

Define $K_{0}=\mathrm{supp}(\mu )$ and notice that $\mu (s^{-1}(K_{0}))=\mu
(K_{0})=1.$ Furthermore, $s^{-1}(K_{0})$ is weak$^{\ast }$ closed and $K_{0}$
is the least weak$^{\ast }$ closed subset of $K$ of full measure. Hence $%
K_{0}\subset s^{-1}(K_{0}).$ Similarly,
\begin{equation*}
\mu (s(K_{0}))=\mu (s^{-1}(s(K_{0})))=\mu (K_{0})=1
\end{equation*}%
and consequently, from weak$^{\ast }$ compactness of $s(K_{0})$, $%
K_{0}\subset s(K_{0}).$ Thus $s(K_{0})=K_{0}$ for every $s\in S$ and, since $%
K$ is a minimal $S$-invariant weak$^{\ast }$ compact subset of $Q$, $%
K=K_{0}. $ We show that $K$ is norm-compact. Since $K$ is weak$^{\ast }$
compact and $Q$ has the RNP, the identity map $\mathrm{id}:(K,\mathrm{weak}%
^{\ast })\rightarrow (K,\mathrm{norm})$ has a point of continuity $x$ (see,
e.g., \cite[Theorem 4.2.13]{Bou}). It follows that for every $\varepsilon >0$
there is a weak$^{\ast }$ open neighbourhood $U$ of $x$ such that $%
\left\Vert x-y\right\Vert <\varepsilon $ for each $y\in U\cap K.$ But $x\in
\mathrm{supp}(\mu )$ and hence
\begin{equation*}
\mu (\{y\in K:\left\Vert x-y\right\Vert <\varepsilon \})\geq \mu (U\cap K)>0
\end{equation*}%
for each $\varepsilon >0.$

Now we follow partly the argument of Bartoszek \cite[Lemma 1]{Ba}. Fix $%
\varepsilon >0$ and let $\mu (\{y\in K:\left\Vert x-y\right\Vert
<\varepsilon \})=\delta >0.$ Notice that by nonexpansivity,%
\begin{equation*}
\{y\in K:\left\Vert x-y\right\Vert <\varepsilon \}\subset s^{-1}(\{y\in
K:\left\Vert sx-y\right\Vert <\varepsilon \})
\end{equation*}%
for each $s\in S$ and, since $\mu $ is $S$-invariant,%
\begin{equation*}
\mu (\{y\in K:\left\Vert sx-y\right\Vert <\varepsilon \})\geq \mu (\{y\in
K:\left\Vert x-y\right\Vert <\varepsilon \})=\delta .
\end{equation*}%
It follows that a number of elements $s_{1},s_{2},...,s_{k}$ such that%
\begin{equation*}
\left\Vert s_{i}x-s_{j}x\right\Vert \geq 2\varepsilon
\end{equation*}%
for each $1\leq i\neq j\leq k$ is bounded by $1/\delta .$ Hence $\{sx:s\in
S\}$ is norm-totally bounded and therefore, $K=\overline{\{sx:s\in S\}}^{%
\mathrm{weak}^{\ast }}=\overline{\{sx:s\in S\}}^{\mathrm{norm}}$ is
norm-compact.

We show that $K$ consists of a single point. Suppose that $r=\diam K>0.$
Then by Lemma \ref{DeMarr}, there is $u\in \overline{%
\co%
}K$ such that $r_{0}=\sup \{\left\Vert u-y\right\Vert :y\in K\}<r.$ Define
\begin{equation*}
Q_{0}=\{x\in Q:\left\Vert x-y\right\Vert \leq r_{0}\text{ for all }y\in K\}.
\end{equation*}%
Then $u\in Q_{0}$ and $Q_{0}$ is a weak$^{\ast }$ compact convex proper
subset of $Q.$ Since the system $(S,Q)$ is nonexpansive and $s(K)=K$, it
follows that $s(Q_{0})\subset Q_{0}$ for each $s\in S$ which contradicts the
minimality of $Q.$ Thus $\diam K=0$ and $K$ consists of a single point which
is a common fixed point of $S$ in $K.$
\end{proof}

As a consequence, we obtain the nonlinear version of Namioka--Phelps'
theorem \cite[Theorem 15]{NaPh}. Recall that a Banach space $X$ is Asplund
if every continuous convex function on any open convex subset $U$ of $X$ is
Fr\'{e}chet differentiable on a dense $G_{\delta }$-subset of $U$. By the
results of Namioka, Phelps and Stegall, $X$ is Asplund iff $X^{\ast }$ has
the RNP.

\begin{corollary}
\label{asp}Suppose $X$ is an Asplund space and $Q$ a weak$^{\ast }$ compact
convex subset of $X^{\ast }$. If $(S,Q)$ is a nonexpansive and norm-distal
dynamical system, then there is a fixed point of $S$ in $Q$.
\end{corollary}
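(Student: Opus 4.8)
The plan is to derive Corollary~\ref{asp} directly from Theorem~\ref{gen}, since the only real work is to verify that the hypotheses transfer. Recall from the discussion in Section~2 that a Banach space $X$ is Asplund precisely when $X^{\ast}$ has the Radon--Nikod\'{y}m property. Hence, given an Asplund space $X$ and a weak$^{\ast}$ compact convex subset $Q\subset X^{\ast}$, I would first observe that $Q$, being a bounded closed convex subset of the dual $X^{\ast}$ (weak$^{\ast}$ compactness forces norm-boundedness via the uniform boundedness principle, and weak$^{\ast}$ closed convex sets are norm closed), inherits the RNP from $X^{\ast}$: every bounded nonempty subset of $Q$ is a bounded nonempty subset of $X^{\ast}$, hence dentable because $X^{\ast}$ has the RNP, and by the characterization recalled in Section~2 this is exactly what it means for $Q$ to have the RNP.

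With that in hand, the rest is immediate. Since $X$ is Asplund, $X^{\ast}$ is itself a dual Banach space (trivially, being a dual), so $Q$ is a weak$^{\ast}$ compact convex subset with the RNP of a dual Banach space. The dynamical system $(S,Q)$ is assumed nonexpansive and norm-distal, which are exactly the remaining hypotheses of Theorem~\ref{gen}. Applying Theorem~\ref{gen} to $Q\subset X^{\ast}$ yields a common fixed point of $S$ in $Q$, which is the assertion of the corollary.

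I do not anticipate a genuine obstacle here: the corollary is a specialization of Theorem~\ref{gen} obtained by recognizing that ``$X$ Asplund'' is one of the standard sufficient conditions ensuring the RNP hypothesis, on par with ``$Q$ weakly compact'' or ``$Q$ norm-separable weak$^{\ast}$ compact'' as mentioned after the statement of Theorem~A. The one point worth stating explicitly, if one wants to be careful, is that the RNP of a dual space passes to its weak$^{\ast}$ compact convex subsets --- but this is covered by the set-theoretic characterization of the RNP via dentability of all bounded subsets, which is clearly hereditary downward along inclusions of bounded closed convex sets. So the proof is essentially a one-line invocation: ``Since $X$ is Asplund, $X^{\ast}$ has the RNP, hence so does $Q$, and Theorem~\ref{gen} applies.''
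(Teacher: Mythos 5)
Your proposal is correct and matches the paper's (implicit) argument exactly: the corollary is stated as an immediate consequence of Theorem~\ref{gen} via the Namioka--Phelps--Stegall equivalence ``$X$ Asplund iff $X^{\ast}$ has the RNP,'' with the RNP passing down to the weak$^{\ast}$ compact convex subset $Q$. Your extra care in checking that the RNP is inherited by $Q$ via the dentability characterization is a correct filling-in of a step the paper leaves unstated.
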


Similarly, since every separable weak$^{\ast }$ compact convex subset of a
dual space has the RNP, we have

\begin{corollary}
Let $Q$ be a separable weak$^{\ast }$ compact convex subset of a dual Banach
space $X^{\ast }$ and let $(S,Q)$ be a nonexpansive and norm-distal
dynamical system. Then there is a fixed point of $S$ in $Q$.
\end{corollary}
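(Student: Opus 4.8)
The plan is to obtain this statement as an immediate consequence of Theorem \ref{gen}; the only point requiring verification is that $Q$ carries the Radon--Nikod\'{y}m property. Recall from Section 2 the characterization that a bounded closed convex subset of a Banach space has the RNP precisely when each of its bounded nonempty subsets is dentable, together with the standard fact (also recorded there) that any norm-separable weak$^{\ast}$ compact convex subset of a dual Banach space meets this condition. Hence, under the separability hypothesis on $Q$ --- read as norm-separability, in accordance with the usage in Theorem A and the preceding corollary --- the set $Q$ has the RNP.

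With this in hand, the hypotheses of Theorem \ref{gen} are fulfilled: $Q$ is a weak$^{\ast}$ compact convex subset with the Radon--Nikod\'{y}m property of the dual Banach space $X^{\ast}$, and $(S,Q)$ is a nonexpansive and norm-distal dynamical system. Applying Theorem \ref{gen} directly produces a common fixed point of $S$ in $Q$, which is exactly the desired conclusion.

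I do not expect any real obstacle here: the entire content is contained in Theorem \ref{gen} once one invokes the classical implication ``norm-separable $+$ weak$^{\ast}$ compact convex in a dual space $\Rightarrow$ RNP''. The only subtlety worth flagging is terminological --- ``separable'' must mean \emph{norm}-separable, since weak$^{\ast}$-separability alone does not force the RNP --- and this is precisely the reading consistent with the rest of the paper.
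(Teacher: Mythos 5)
Your proposal is correct and follows exactly the paper's route: the corollary is stated immediately after the remark that every norm-separable weak$^{\ast}$ compact convex subset of a dual space has the RNP, and is then deduced directly from Theorem \ref{gen}. Your terminological caveat about ``separable'' meaning norm-separable matches the intended reading.
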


Furthermore, every weakly compact convex subset of a Banach space $X$ can be
regarded as a weak$^{\ast }$ compact convex subset (with the RNP) of $%
X^{\ast \ast }.$ Hence we obtain the nonlinear Ryll-Nardzewski theorem in a
Banach space.

\begin{corollary}
\label{weakly}Let $Q$ be a weakly compact convex subset of a Banach space $X$
and let $(S,Q)$ be a nonexpansive and norm-distal dynamical system. Then
there is a fixed point of $S$ in $Q$.
\end{corollary}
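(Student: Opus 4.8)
The plan is to deduce Corollary \ref{weakly} directly from Theorem \ref{gen} by transporting the whole situation into the bidual via the canonical embedding. Write $j\colon X\to X^{\ast\ast}$ for the evaluation map, $j(x)(x^{\ast})=x^{\ast}(x)$; it is a linear isometry onto its range. The one nontrivial (but classical) point to record is that $j$ is a homeomorphism from $(X,\mathrm{weak})$ onto $\bigl(j(X),\mathrm{weak}^{\ast}\bigr)$, where $\mathrm{weak}^{\ast}$ means the $\sigma(X^{\ast\ast},X^{\ast})$-topology: a net $(x_{\alpha})$ converges weakly to $x$ in $X$ exactly when $x^{\ast}(x_{\alpha})\to x^{\ast}(x)$ for every $x^{\ast}\in X^{\ast}$, which is precisely weak$^{\ast}$ convergence of $j(x_{\alpha})$ to $j(x)$ in $X^{\ast\ast}$.

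First I would set $Q'=j(Q)$. Since $j$ is linear, $Q'$ is convex, and since $Q$ is weakly compact and $j$ is a weak-to-weak$^{\ast}$ homeomorphism, $Q'$ is weak$^{\ast}$ compact in $X^{\ast\ast}=(X^{\ast})^{\ast}$. By the characterization recalled in Section 2, $Q$ (being weakly compact convex) has the RNP, and since the RNP of a bounded closed convex set depends only on its metric structure, the isometry $j$ carries it over: $Q'$ is a weak$^{\ast}$ compact convex subset of the dual Banach space $X^{\ast\ast}$ with the Radon--Nikod\'ym property. (Alternatively, $j$ is also weak-to-weak continuous, so $Q'$ is itself weakly compact convex in $X^{\ast\ast}$, which gives the RNP again.)

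Next I would push the dynamical system forward by declaring $s\cdot j(x)=j(sx)$ for $s\in S$, $x\in Q$. This is well defined by injectivity of $j$, it is a semigroup action, and each map $j(x)\mapsto j(sx)$ is weak$^{\ast}$-to-weak$^{\ast}$ continuous because $x\mapsto sx$ is weak-to-weak continuous on $Q$ and $j$ identifies these two topologies. Because $j$ is a linear isometry, $\|s\cdot j(x)-s\cdot j(y)\|=\|sx-sy\|\le\|x-y\|=\|j(x)-j(y)\|$, so $(S,Q')$ is nonexpansive, and $0\notin\overline{\{sx-sy:s\in S\}}^{\|\cdot\|}$ is equivalent to $0\notin\overline{\{s\cdot j(x)-s\cdot j(y):s\in S\}}^{\|\cdot\|}$, so $(S,Q')$ is norm-distal. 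Thus Theorem \ref{gen} applies and produces a point $j(x_{0})\in Q'$ fixed by $S$, i.e. $j(sx_{0})=j(x_{0})$ for all $s\in S$; injectivity of $j$ yields $sx_{0}=x_{0}$, so $x_{0}$ is a common fixed point of $S$ in $Q$.

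I do not anticipate a real obstacle: everything needed is already quoted in the paper — the weak/weak$^{\ast}$ identification under the canonical embedding and the fact that weakly compact convex sets have the RNP — and the rest is a routine check that nonexpansivity, norm-distality, and the continuity requirement in the definition of a dynamical system all transfer verbatim through the isometry $j$.
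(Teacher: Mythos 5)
Your proposal is correct and is exactly the paper's route: the paper disposes of this corollary in one sentence by regarding $Q$ as a weak$^{\ast}$ compact convex subset (with the RNP) of $X^{\ast\ast}$ via the canonical embedding and invoking Theorem \ref{gen}. You have simply written out the routine verifications (weak-to-weak$^{\ast}$ homeomorphism, transfer of nonexpansivity, norm-distality and continuity through the isometry $j$) that the paper leaves implicit.
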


In the next section we generalize Corollary \ref{weakly} to locally convex
spaces.

\section{Nonlinear Ryll-Nardzewski's theorem in locally convex spaces}

In a locally convex space $(X,\tau ),$ the arguments in the proof of Theorem %
\ref{gen} are not completely applicable. However, in the case of weakly
compact convex sets, we can reduce the problem (by a classical argument) to $%
S$ being countable and then use Lemma \ref{Hsu}.

\begin{theorem}
\label{loc}Let $Q$ be a weakly compact convex subset of a locally convex
space $(X,\tau )$ whose topology $\tau $ is determined by a family $\mathcal{%
N}$ of seminorms on $X.$ If a dynamical system $(S,Q)$ is $\mathcal{N}$%
-nonexpansive and $\tau $-distal, then there is a fixed point of $S$ in $Q$.
\end{theorem}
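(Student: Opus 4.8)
The plan is to reduce Theorem \ref{loc} to the Banach space case already handled in Corollary \ref{weakly}, via the standard Ryll-Nardzewski device of passing to a \emph{countable} subsemigroup. First I would invoke Kuratowski--Zorn's lemma to pick a minimal nonempty weakly compact convex $S$-invariant subset $Q_0\subset Q$; then pick, inside $Q_0$, a minimal nonempty weakly compact $S$-invariant subset $K$. By minimality $K=\overline{\{sx:s\in S\}}^{\mathrm{weak}}$ for every $x\in K$, and the usual argument shows $s(K)=K$ for every $s\in S$ (the image $s(K)$ is weakly compact, $S$-invariant and contained in $K$).

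The key step is to show $K$ is $\tau$-totally bounded, so that $\overline{K}^{\tau}$ is $\tau$-compact and we land in a Banach-space-type situation. To apply Lemma \ref{Hsu} I need $K$ to be $\tau$-separable, which need not hold for the full semigroup $S$; here is where the classical reduction enters. Given distinct $x,y\in K$ and a seminorm $p\in\mathcal{N}$ with $p(sx-sy)\ge\varepsilon$ for all $s\in S$ (possible by $\tau$-distality since $\mathcal N$ is a fixed defining family — one may rescale so a single seminorm works, or argue with finitely many), one builds a \emph{countable} subsemigroup $S_1\subset S$ as follows: fix $x_0\in K$, and using weak-density of the orbit together with weak compactness, recursively enlarge a countable set of semigroup elements so that the orbit of $x_0$ under the generated countable semigroup $S_1$ is weakly dense in $K$. (This is the standard Namioka-style construction; one needs only countably many elements to "see" density of $Sx_0$ along a countable net-indexing argument, or equivalently one extracts $S_1$ so that $K=\overline{S_1x_0}^{\mathrm{weak}}$ — here one uses that a weakly compact set which is weakly separable makes this bookkeeping go through, and $\overline{S_1 x_0}^{\mathrm{weak}}$ is such a set.) Let $K_1=\overline{S_1 x_0}^{\mathrm{weak}}$; then $K_1$ is weakly compact, $\tau$-separable (being the weak closure of a countable set, hence weakly separable, and weak separability of a weakly compact set implies $\tau$-separability for weakly compact sets via the fact that on such sets norm/seminorm-separability is governed by weak separability — more carefully: $K_1$ sits in the closed convex hull of a countable set, which is $\tau$-separable), and one may further shrink $K_1$ to a minimal weakly compact $S_1$-invariant subset on which $s(K_1)=K_1$ for all $s\in S_1$. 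Now Lemma \ref{Hsu} applies to $(S_1,K_1)$ and yields that $K_1$ is $\tau$-totally bounded.

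With $K_1$ $\tau$-totally bounded, $L:=\overline{K_1}^{\tau}$ is $\tau$-compact; since $\tau$ is weaker than the weak topology's... no — rather, on $Q$ the weak topology and $\tau$ need not be comparable, but $L$ is $\tau$-compact and weakly closed subsets of the weakly compact $\overline{Q}$ intersect it nicely; the cleanest route is: $K_1$ being weakly compact and $\tau$-totally bounded forces the weak and $\tau$ topologies to coincide on $K_1$ (a continuous bijection from a compact space, here $(K_1,\mathrm{weak})\to(K_1,\tau)$, is a homeomorphism once we know $(K_1,\tau)$ is Hausdorff and the identity is $\tau$-to-weak... actually the identity $(K_1,\mathrm{weak})\to(K_1,\tau)$ is continuous and $K_1$ is weakly compact, so its image is $\tau$-compact hence $\tau$-closed, and the map is a closed continuous bijection, thus a homeomorphism). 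Therefore $(K_1,\tau)$ is compact, and on the $\tau$-compact convex set $C:=\overline{\co}^{\tau}(K_1)$ (which is $\tau$-compact since $K_1$ is $\tau$-compact in a locally convex space, by the Krein--Šmulian-type theorem on compactness of closed convex hulls, or directly since $K_1\subset Q$ weakly compact and $\co K_1$ has weakly compact closure equal to its $\tau$-closure on this set), the system $(S_1,C)$ is $\mathcal N$-nonexpansive and $\tau$-distal. Since $(C,\tau)$ is a compact metrizable-or-not convex set and $\tau$ is generated by seminorms, Lemma \ref{DeMarr} (in the locally convex form noted after it) together with the argument of Theorem \ref{gen} — replacing "weak$^\ast$ compact with RNP" by "$\tau$-compact", which is even easier since no fragmentation lift is needed (on a $\tau$-compact set, $\tau$-distality is already what Theorem \ref{Furst} wants after embedding into probability measures) — produces a common fixed point of $S_1$ in $C$, in fact one can run the Furstenberg-measure argument on $(S_1,K_1)$ directly. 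This gives $\mathrm{Fix}(S_1)\cap Q\neq\emptyset$.

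The final step upgrades this to a fixed point of the whole semigroup $S$. For each $s\in S$ the fixed point set $\mathrm{Fix}(s)\cap Q$ is nonempty (apply the above to the countable semigroup generated by $s$), weakly compact (it is weakly closed in $Q$, as $x\mapsto sx$ is weakly continuous) and convex (since... $s$ need not be affine!) — so here the nonlinear difficulty appears and the honest route is different: one instead proves that $\mathrm{Fix}(T)\cap Q\neq\emptyset$ for every \emph{countable} subsemigroup $T$, then observes that the family $\{\,\overline{\mathrm{Fix}(s)\cap Q}^{\mathrm{weak}} : s\in S\,\}$ has the finite intersection property — because any finite subset of $S$ generates a countable semigroup $T$ with $\mathrm{Fix}(T)\cap Q\subset\bigcap_{i}\mathrm{Fix}(s_i)\cap Q$ nonempty — and by weak compactness $\bigcap_{s\in S}\mathrm{Fix}(s)\cap Q\neq\emptyset$, which is exactly a common fixed point of $S$ in $Q$. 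I expect the main obstacle to be the careful construction of the countable subsemigroup $S_1$ with weakly dense orbit and the verification that the minimal $S_1$-invariant piece remains $\tau$-separable; this is the technical heart, and it is precisely the point at which one must be careful that separability for the weak topology transfers appropriately. Everything downstream (applying Lemma \ref{Hsu}, coincidence of $\tau$ and weak topology, running the Furstenberg/DeMarr argument, and the finite-intersection-property wrap-up) is routine given the tools assembled in Section 2.
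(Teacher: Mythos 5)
Your overall strategy is the same as the paper's: reduce to countable subsemigroups via the finite-intersection-property argument on the weakly closed fixed-point sets, pass to a minimal weakly compact convex $S$-invariant set and a minimal weakly compact $S$-invariant $K$ inside it, use $\tau$-separability together with Lemma \ref{Hsu} to make $K$ $\tau$-totally bounded (hence $\tau$-compact after passing to the completion of $X$), and finish with the DeMarr nondiametral-point construction contradicting minimality of the convex set. Two of your steps, however, have genuine gaps, and they are intertwined.

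First, your justification of $s(K)=K$ is wrong: for a noncommutative semigroup, $s(K)$ is weakly compact and contained in $K$, but it need not be $S$-invariant --- $t(s(K))=(ts)(K)\subset K$ gives no containment in $s(K)$ --- so minimality of $K$ does not yield $K\subset s(K)$. This equality is an explicit hypothesis of Lemma \ref{Hsu} (whose proof uses $K=s_{p}s_{p-1}\cdots s_{1}(K)$), so your route to $\tau$-total boundedness does not get off the ground. Second, the way the paper establishes $s(K)=K$ --- and the conceptual heart of any Ryll-Nardzewski argument --- is precisely the step you omit: one must first \emph{lift} the $\tau$-distality of $(S,K)$ to distality in the weak topology (the paper does this with a Baire-category/fragmentation argument on the $\tau$-separable set $K$, producing a weakly open $W$ with $(z+W)\cap K\subset y_{i}+V$), then apply Furstenberg's Theorem \ref{Furst} to get an $S$-invariant Radon probability measure $\mu$ on $K$, and only then deduce $s(\supp \mu)=\supp \mu$ and $K=\supp \mu$ by minimality. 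Your proposed shortcut --- obtain $\tau$-compactness first, so that the weak and $\tau$ topologies coincide on $K$ and distality lifts for free --- is circular, because the $\tau$-compactness is to come from Lemma \ref{Hsu}, which needs the surjectivity you have not proved. A smaller remark: your attempted construction of a countable $S_{1}$ with $\overline{S_{1}x_{0}}^{\mathrm{weak}}=K$ cannot work when $K$ is non-separable; the clean order of operations is the one you reach only at the very end --- perform the finite-intersection-property reduction first, so that $S$ is countable from the outset and Mazur's lemma gives the $\tau$-separability of $K$ directly.
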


\begin{proof}
By weak compactness, it is sufficient to show that each finite subset of $S$
has a common fixed point in $Q.$ Hence we can assume that $S$ is countable
and $Q$ is a minimal $S$-invariant weakly compact convex subset of $X$. Let $%
K$ be a minimal $S$-invariant weakly compact subset of $Q.$

We show that $S$ is weakly-distal on $K.$ If not, then there are $x,y,u\in K$
and nets $(s_{\alpha }x),(s_{\alpha }y)$ such that $w$-$\lim s_{\alpha }x=w$-%
$\lim s_{\alpha }y=u$. Notice that $K=\overline{\{su:s\in S\}}^{\mathrm{weak}%
}$ by the minimality of $K.$ Since $(S,K)$ is $\tau $-distal, there is a $%
\tau $-neighborhood $U$ of $0$ such that $s(x)-s(y)\notin U$ for each $s\in
S $. Let $V$ be a convex $\tau $-closed neighborhood of $0$ such that $%
V-V\subset U$. By Mazur's lemma, $K\subset \overline{%
\co%
}^{\tau }\{su:s\in S\}$ and, since $S$ is countable, $K$ is $\tau $%
-separable. Hence there exists a sequence $\{y_{n}\}\subset K$ such that $%
K\subset \bigcup_{n=1}^{\infty }(y_{n}+V).$ From Baire's category theorem,
there is a weakly open neighbourhood $W$ of $0$ and $z,y_{i}\in K$ such that
$(z+W)\cap K\subset y_{i}+V.$ Thus there is $s_{0}\in S$ such that $%
s_{0}u\in z+W.$ It follows that%
\begin{equation*}
w\text{-}\lim s_{0}s_{\alpha }x=s_{0}u=w\text{-}\lim s_{0}s_{\alpha }y
\end{equation*}%
and eventually $s_{0}s_{\alpha }x,s_{0}s_{\alpha }y\in (z+W)\cap K\subset
y_{i}+V.$ Hence $s_{0}s_{\alpha }x-s_{0}s_{\alpha }y$ is eventually in $%
V-V\subset U$ but it contradicts our choice of $U.$ Thus $S$ is
weakly-distal on $K.$

Now, as in the proof of Theorem \ref{gen}, we can show that $K$ admits an $S$%
-invariant Radon probability measure and $s(K)=K$ for each $s\in S.$
Moreover, $K$ is $\tau $-separable and it follows from Lemma \ref{Hsu} that $%
K$ is totally bounded. We can certainly assume that $(X,\tau )$ is complete
(since otherwise we consider the closure of $K$ in the completion of $X$).
Thus $K$ is $\tau $-compact. If $K$ consists of more than one point, then
there exists a seminorm $q\in \mathcal{N}$ such that $r=\sup \{q(x-y):x,y\in
K\}>0.$ Then by a counterpart of Lemma \ref{DeMarr}, there is $u\in
\overline{%
\co%
}K$ such that $r_{0}=\sup \{q(u-y):y\in K\}<r.$ Define
\begin{equation*}
Q_{0}=\{x\in Q:q(x-y)\leq r_{0}\text{ for all }y\in K\}.
\end{equation*}%
Then $u\in Q_{0}$ and $Q_{0}$ is a weakly compact convex proper subset of $%
Q. $ Since the action is $\mathcal{N}$-nonexpansive and $s(K)=K,s\in S,$ we
have $s(Q_{0})\subset Q_{0}$ for each $s\in S$ which contradicts the
minimality of $Q.$ Thus $K$ consists of a single point $x$ and $sx=x$ for
every $s\in S.$
\end{proof}

There is a natural generalization of Asplund Banach spaces, introduced in
\cite[Definition 4.1]{Me} (see also \cite[Definition 1.10]{GlMe}): a locally
convex space $(X,\tau )$ is called a Namioka--Phelps space if every
equicontinuous subset $K$ in $X^{\ast }$ is $(\mathrm{weak}^{\ast },\xi
^{\ast }$)-fragmented, where $\xi ^{\ast }$ denotes the natural uniform
structure of $X^{\ast }$. It is shown in \cite{Me} that the class of
Namioka--Phelps spaces contains in particular Fr\'{e}chet differentiable
spaces, semireflexive spaces and nuclear spaces, and is closed under taking
subspaces, products and direct sums (see also \cite[Remark 1.12]{GlMe}).
Since the weak compactness of $Q$ is applied in the proof of Theorem \ref%
{loc} in a substantial way to show the $\tau $-separability of $K$, it is
not clear how to extend Corollary \ref{asp} to the case of locally convex
spaces. This leads to the following natural questions.

\begin{quest}
Is it true that Corollary \ref{asp} remains true for Namioka--Phelps spaces?
\end{quest}

More generally, we can ask:

\begin{quest}
Do there exist nonlinear counterparts of Theorems 1.5 and 1.6 in \cite{GlMe}?
\end{quest}

\section{Applications}

In 2012, Bader, Gelander and Monod \cite{BGM} gave a beautiful proof of a
fixed point theorem in $L$-embedded Banach spaces. One of its consequences
is the optimal solution to the following \textquotedblleft derivation
problem\textquotedblright : if $G$ is a locally compact group, then any
derivation from the convolution algebra $L^{1}(G)$ to $M(G)$ is inner. The
problem was studied since 1960s and proved for the first time by Losert \cite%
[Theorem 1.1]{Lo}. We apply Theorem \ref{gen} to show a nonlinear extension
of Bader--Gelander--Monod's theorem \cite[Theorem A]{BGM}.

Recall that a Banach space $X$ is said to be $L$-embedded if its bidual $%
X^{\ast \ast }$ can be decomposed as $X^{\ast \ast }=X\oplus _{1}X_{s}$ for
some $X_{s}\subset X^{\ast \ast }$ (with the norm being the sum of norms of $%
X$ and $X_{s}$). The class of $L$-embedded Banach spaces includes all $L_{1}$
spaces, preduals of von Neumann algebras and the Hardy space $H_{1}$. We
need the following generalization.

\begin{definition}[see \protect\cite{LaZh2}]
Let $C$ be a nonempty subset of a Banach space $X$ and denote by $\overline{C%
}^{\mathrm{wk}^{\ast }}$ the closure of $C$ in $X^{\ast \ast }$ in the weak$%
^{\ast }$ topology of $X^{\ast \ast }$. We say that $C$ is $L$-embedded if
there is a subspace $X_{s}$ of $X^{\ast \ast }$ such that $X\oplus
_{1}X_{s}\subset X^{\ast \ast }$ and $\overline{C}^{\mathrm{wk}^{\ast
}}\subset C\oplus _{1}X_{s}.$
\end{definition}

It was proved in \cite{LaZh2} that every $L$-embedded set is weakly closed.
Moreover, a Banach space is $L$-embedded iff its unit ball is $L$-embedded.
Notice that a weakly compact subset $C$ of any Banach space $X$ is $L$%
-embedded since $\overline{C}^{\mathrm{wk}^{\ast }}=C.$

If $A,C$ are subsets of a Banach space $X$ with $A$ bounded, we define the
Chebyshev radius of $A$ in $C$ by%
\begin{equation*}
r_{C}(A)=\inf_{x\in C}\sup_{y\in A}\left\Vert x-y\right\Vert
\end{equation*}%
and the Chebyshev center of $A$ in $C$ by%
\begin{equation*}
E_{C}(A)=\{x\in C:\sup_{y\in A}\left\Vert x-y\right\Vert =r_{C}(A)\}.
\end{equation*}

\begin{lemma}[{see {\protect\cite[Lemma 3.3]{LaZh2}}}]
\label{embed}Let $C$ be an $L$-embedded subset of a Banach space $X$ and $A$
a bounded subset of $X$. Then the Chebyshev center $E_{C}(A)$ is weakly
compact.
\end{lemma}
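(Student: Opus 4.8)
The plan is to identify $E_{C}(A)$ with its own weak$^{\ast}$ closure inside $X^{\ast\ast}$ and to use the $L$-decomposition to prevent that closure from leaving $C$.

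We may assume $C$ and $A$ are nonempty, since otherwise there is nothing to prove. Fixing $x_{0}\in C$ and using the boundedness of $A$ gives $r_{C}(A)\le \sup_{y\in A}\|x_{0}-y\|<\infty$, and then for any $x\in E_{C}(A)$ and any fixed $y_{0}\in A$ we get $\|x\|\le\|y_{0}\|+r_{C}(A)$; hence $E_{C}(A)$ is norm-bounded. Let $\overline{E_{C}(A)}^{\mathrm{wk}^{\ast}}$ denote its closure in $X^{\ast\ast}$ with respect to the weak$^{\ast}$ topology of $X^{\ast\ast}$. Being the weak$^{\ast}$ closure of a bounded set, it is weak$^{\ast}$ compact by the Banach--Alaoglu theorem; and since $E_{C}(A)\subseteq C$, monotonicity of the closure together with the hypothesis gives $\overline{E_{C}(A)}^{\mathrm{wk}^{\ast}}\subseteq\overline{C}^{\mathrm{wk}^{\ast}}\subseteq C\oplus_{1}X_{s}$.

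The heart of the argument is to show that $\overline{E_{C}(A)}^{\mathrm{wk}^{\ast}}\subseteq E_{C}(A)$. Let $\xi$ belong to the left-hand side and write $\xi=x+x_{s}$ with $x\in C\subseteq X$ and $x_{s}\in X_{s}$. For each $y\in A$ we have $\xi-y=(x-y)+x_{s}$ with $x-y\in X$, so the $\ell_{1}$-nature of the decomposition $X\oplus_{1}X_{s}$ yields $\|\xi-y\|=\|x-y\|+\|x_{s}\|$. On the other hand, $\xi$ is the weak$^{\ast}$ limit of a net $(x_{\alpha})$ in $E_{C}(A)$; since $y\in X$ is fixed, $x_{\alpha}-y\to\xi-y$ weak$^{\ast}$ in $X^{\ast\ast}$, and because the norm of $X^{\ast\ast}$ is weak$^{\ast}$ lower semicontinuous while $\|x_{\alpha}-y\|\le r_{C}(A)$ for all $\alpha$, we obtain $\|\xi-y\|\le r_{C}(A)$. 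Combining the two facts, $\|x-y\|+\|x_{s}\|\le r_{C}(A)$ for every $y\in A$; taking the supremum over $y\in A$ and using $\sup_{y\in A}\|x-y\|\ge r_{C}(A)$, which is valid since $x\in C$, forces $\|x_{s}\|\le 0$. Hence $x_{s}=0$, so $\xi=x\in C$ with $\sup_{y\in A}\|x-y\|=r_{C}(A)$, i.e.\ $\xi\in E_{C}(A)$.

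Together with the trivial inclusion $E_{C}(A)\subseteq\overline{E_{C}(A)}^{\mathrm{wk}^{\ast}}$ this gives $E_{C}(A)=\overline{E_{C}(A)}^{\mathrm{wk}^{\ast}}$, so $E_{C}(A)$ is weak$^{\ast}$ compact and contained in $X$; since the relative weak$^{\ast}$ topology that $X$ inherits from $X^{\ast\ast}$ coincides with its weak topology, $E_{C}(A)$ is weakly compact. The only step that is not pure bookkeeping is the displayed core step, and there the single mechanism doing the work is the rigidity of the $\ell_{1}$-splitting $\|x-y\|+\|x_{s}\|$ played against the weak$^{\ast}$ lower semicontinuity of the norm; I expect verifying this to be the main point, everything else being routine.
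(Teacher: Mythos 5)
Your proof is correct: the rigidity of the $\ell_{1}$-splitting $\|\xi-y\|=\|x-y\|+\|x_{s}\|$ played against the weak$^{\ast}$ lower semicontinuity of the bidual norm is exactly the mechanism that makes $E_{C}(A)$ coincide with its own weak$^{\ast}$ closure. The paper gives no proof of this lemma (it is quoted from Lau--Zhang, Lemma 3.3), and your argument is essentially the standard one from that source, so nothing needs to be added.
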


Combining Corollary \ref{weakly} with Lemma \ref{embed} yields the following
extension of \cite[Theorem A]{BGM}.

\begin{theorem}
\label{L-em}Let $Q$ be a bounded convex $L$-embedded subset of a Banach
space $X$ and let $(S,(Q,\mathrm{weak}))$ be a nonexpansive and norm-distal
dynamical system. If $Q$ contains a bounded subset $A$ such that $s(A)=A$
for all $s\in S$, then there is a fixed point of $S$ in $E_{Q}(A).$
\end{theorem}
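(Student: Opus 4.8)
The plan is to reduce Theorem \ref{L-em} to Corollary \ref{weakly} by showing that the Chebyshev center $E_{Q}(A)$ is a nonempty, convex, weakly compact, $S$-invariant subset of $X$ on which $(S, (E_Q(A),\mathrm{weak}))$ restricts to a nonexpansive and norm-distal dynamical system. Once that is established, Corollary \ref{weakly} applied to $Q' = E_Q(A)$ immediately yields a fixed point of $S$ in $E_Q(A)$, which is exactly the conclusion.

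First I would record the basic structural facts about $E_Q(A)$. Weak compactness is immediate from Lemma \ref{embed} (with $C = Q$, noting $Q$ is $L$-embedded and $A \subset Q$ is bounded). Convexity of $E_Q(A)$ follows from convexity of $Q$ and convexity of the function $x \mapsto \sup_{y\in A}\|x-y\|$ together with the fact that $r_Q(A)$ is the infimum of this function over the convex set $Q$; nonemptiness then follows from weak lower semicontinuity of $x \mapsto \sup_{y\in A}\|x-y\|$ on the weakly compact set $\overline{Q}^{\mathrm{wk}}$ (one should be slightly careful here: the standard argument shows the infimum over $\overline{Q}^{\mathrm{wk}}$ is attained, and Lemma \ref{embed}/its proof in \cite{LaZh2} guarantees the attaining points actually lie in $Q$, hence in $E_Q(A)$).

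The key step is $S$-invariance. Fix $s \in S$ and $x \in E_Q(A)$. Since $(S,(Q,\mathrm{weak}))$ is a dynamical system, $sx \in Q$. Using $\mathcal{N}$-nonexpansivity (here the norm) and the hypothesis $s(A) = A$, for every $y \in A$ write $y = s y'$ with $y' \in A$, so that $\|sx - y\| = \|sx - sy'\| \le \|x - y'\|$; taking the supremum over $y \in A$ (equivalently over $y' \in A$) gives $\sup_{y\in A}\|sx - y\| \le \sup_{y'\in A}\|x - y'\| = r_Q(A)$. Combined with $sx \in Q$ and the definition of $r_Q(A)$ as an infimum over $Q$, this forces $\sup_{y\in A}\|sx-y\| = r_Q(A)$, i.e. $sx \in E_Q(A)$. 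Thus $s(E_Q(A)) \subset E_Q(A)$ for every $s \in S$. Finally, the restricted system $(S,(E_Q(A),\mathrm{weak}))$ inherits weak continuity of each map from $(S,(Q,\mathrm{weak}))$, inherits nonexpansivity trivially, and inherits norm-distality trivially (the defining condition $0 \notin \overline{\{sx-sy:s\in S\}}^{\mathrm{norm}}$ for distinct $x,y \in E_Q(A)$ is just a special case of the same condition on $Q$).

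With all of this in hand, Corollary \ref{weakly} applies verbatim to the weakly compact convex set $E_Q(A)$ and the system $(S,(E_Q(A),\mathrm{weak}))$, producing a point fixed by all of $S$ and lying in $E_Q(A)$, as claimed. I expect the only genuinely delicate point to be the nonemptiness of $E_Q(A)$ — specifically, confirming that the minimizers of the Chebyshev functional can be taken inside $Q$ rather than merely inside the bidual closure $\overline{Q}^{\mathrm{wk}^{\ast}}$; but this is precisely the content packaged into Lemma \ref{embed} and its proof, so it may be quoted rather than reproved. Everything else is a routine transfer of hypotheses from $Q$ to the invariant subset $E_Q(A)$.
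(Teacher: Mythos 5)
Your proposal is correct and follows exactly the same route as the paper: the paper's proof consists precisely of observing that $E_Q(A)$ is convex, $S$-invariant and (by Lemma \ref{embed}) weakly compact, and then invoking Corollary \ref{weakly}. You have merely filled in the routine verifications (notably the $S$-invariance via $s(A)=A$ and nonexpansivity, and the nonemptiness packaged into Lemma \ref{embed}) that the paper leaves implicit.
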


\begin{proof}
Notice that $E_{Q}(A)$ is convex, $s(E_{Q}(A))\subset E_{Q}(A)$ and by Lemma %
\ref{embed}, $E_{Q}(A)$ is weakly compact. Therefore, we can apply Corollary %
\ref{weakly} to obtain a fixed point of $S$ in $E_{Q}(A).$
\end{proof}

\begin{corollary}
Let $A$ be a non-empty bounded subset of an $L$-embedded Banach space $X$.
Then there is a point in $X$ fixed by every weakly continuous isometry of $X$
into $X$ preserving $A.$ Moreover, one can choose a fixed point which
minimizes $\sup_{a\in A}\left\Vert x-a\right\Vert $ over all $x\in X.$
\end{corollary}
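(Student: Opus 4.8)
The plan is to deduce the statement from Theorem \ref{L-em} by choosing an appropriate ambient set $Q$. Let $S$ be the set of all weakly continuous isometries $s\colon X\to X$ with $s(A)=A$ (this is what ``preserving $A$'' means here); it contains the identity and is closed under composition, so it is a semigroup, and because every $s\in S$ is weakly continuous, the obvious action turns $(S,(Q,\mathrm{weak}))$ into a dynamical system for any $S$-invariant $Q\subset X$. Each $s\in S$ is nonexpansive, and the system is norm-distal: for distinct $x,y\in X$ we have $\|sx-sy\|=\|x-y\|>0$ for all $s\in S$, hence $0\notin\overline{\{sx-sy:s\in S\}}^{\mathrm{norm}}$. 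So the whole task reduces to producing a bounded convex $L$-embedded set $Q\subset X$ with $A\subset Q$ and $s(Q)\subset Q$ for all $s\in S$, and then checking that the Chebyshev centre $E_{Q}(A)$ furnished by Theorem \ref{L-em} coincides with $E_{X}(A)$.

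If $A=\{a_{0}\}$ is a singleton the claim is immediate, so assume $\rho:=\diam A>0$ and put
\[
Q=\bigcap_{a\in A}\{x\in X:\|x-a\|\le\rho\}.
\]
This $Q$ is convex, nonempty (it contains every point of $A$, since $\|a-a'\|\le\diam A=\rho$), and bounded (it is contained in the ball of radius $\rho$ about any point of $A$). It is $S$-invariant: if $\sup_{a\in A}\|x-a\|\le\rho$ and $s\in S$, then, using $s(A)=A$ and that $s$ is an isometry, $\sup_{a\in A}\|sx-a\|=\sup_{a'\in A}\|sx-sa'\|=\sup_{a'\in A}\|x-a'\|\le\rho$. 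Finally $Q$ is $L$-embedded: since $X$ is $L$-embedded its closed unit ball is $L$-embedded, hence so is every closed ball $\{x:\|x-a\|\le\rho\}$ (translating by a vector of $X$ and scaling preserve the property, as one sees from $\overline{a+\rho B_{X}}^{\mathrm{wk}^{\ast}}=a+\rho\overline{B_{X}}^{\mathrm{wk}^{\ast}}$ and the fact that $X_{s}$ is a linear subspace), and an intersection of subsets of $X$ that are $L$-embedded relative to the common complement $X_{s}$ in $X^{\ast\ast}=X\oplus_{1}X_{s}$ is again such: if $C=\bigcap_{i}C_{i}$ then $\overline{C}^{\mathrm{wk}^{\ast}}\subset\bigcap_{i}\overline{C_{i}}^{\mathrm{wk}^{\ast}}\subset\bigcap_{i}(C_{i}\oplus_{1}X_{s})$, and this last set equals $C\oplus_{1}X_{s}$ because an element $\xi=x+v$ with $x\in X$, $v\in X_{s}$, lies in $C_{i}\oplus_{1}X_{s}$ precisely when $x\in C_{i}$, by uniqueness of the decomposition.

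Now Theorem \ref{L-em} applies to $(S,(Q,\mathrm{weak}))$ and yields a point $p\in E_{Q}(A)$ fixed by every $s\in S$. It remains to identify $E_{Q}(A)$ with $E_{X}(A)$. Since $Q\subset X$ we have $r_{X}(A)\le r_{Q}(A)$; conversely, $r_{X}(A)\le\diam A=\rho$, so any $x\in X$ whose farthest distance to $A$ is at most $\rho$ already lies in $Q$, and this applies to points almost realising $r_{X}(A)$ (and to $a_{0}$ itself in the extreme case $r_{X}(A)=\rho$), whence $r_{Q}(A)\le r_{X}(A)$; thus $r_{Q}(A)=r_{X}(A)$. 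Since moreover $E_{X}(A)\subset Q$ (a point at farthest distance $r_{X}(A)\le\rho$ from $A$ lies in each of the defining balls), we get $E_{Q}(A)=\{x\in Q:\sup_{a\in A}\|x-a\|=r_{X}(A)\}=E_{X}(A)$. Hence $p$ is fixed by every weakly continuous isometry of $X$ preserving $A$ and $\sup_{a\in A}\|p-a\|=r_{X}(A)=\inf_{x\in X}\sup_{a\in A}\|x-a\|$, which is the asserted minimality. The only point that needs genuine care, rather than routine bookkeeping, is the $L$-embeddedness of $Q$ -- in particular checking that the closed balls $\{x:\|x-a\|\le\rho\}$ are $L$-embedded relative to the same complementary subspace $X_{s}$ that witnesses the $L$-embeddedness of $X$ -- after which Theorem \ref{L-em} carries the fixed-point argument.
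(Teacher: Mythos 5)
Your proof is correct and follows the route the paper intends: the corollary is deduced from Theorem \ref{L-em} by replacing $X$ with a bounded convex $L$-embedded $S$-invariant set $Q$ (an intersection of balls around the points of $A$) whose Chebyshev centre for $A$ coincides with that in $X$. The only substantive verification, that intersections of closed balls inherit $L$-embeddedness via the common complement $X_{s}$, is carried out correctly and is exactly the detail the paper leaves implicit.
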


Note that in \cite{BGM} a similar statement is asserted for affine
isometries.

Our next result concerns the qualitative information about the structure of
the set of fixed points of nonexpansive distal systems. We shall rely on the
following consequence of Bruck's theorem \cite[Theorem 3]{Br}.

\begin{theorem}
\label{Bruck}Let $Q$ be a compact Hausdorff topological space and $S$ a
(discrete) semigroup of mappings on $Q$. Suppose that $S$ is compact in the
topology of pointwise convergence and each nonempty closed $S$-invariant
subset of $Q$ contains a fixed point of $S$. Then there exists in $S$ a
retraction of $Q$ onto $F(S)=\{x\in Q:sx=x$ for every $s\in S\}.$
\end{theorem}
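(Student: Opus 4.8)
The plan is to view $S$ as a compact right-topological semigroup acting on $Q$ and to extract the retraction from a minimal left ideal, letting the fixed-point hypothesis collapse the relevant orbits. Since $S$ is a semigroup of self-maps of $Q$ under composition, is compact in the pointwise topology, and each right translation $s\mapsto s\circ a$ (with $a\in S$ fixed) is pointwise-continuous, $S$ is a compact right-topological semigroup. The first step is the standard structural fact that such a semigroup has a minimal left ideal. I would order the nonempty closed left ideals of $S$ by reverse inclusion; this family is nonempty because every $Sa=\{s\circ a:s\in S\}$ is a left ideal and is closed, being a continuous image of the compact $S$. Compactness makes intersections of chains nonempty (and they are again closed left ideals), so Zorn's lemma yields a minimal closed left ideal $L$. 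Moreover $Sa\subseteq L$ for every $a\in L$ forces $Sa=L$, so $L$ is minimal among all left ideals.

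The core step is the claim that for each $x\in Q$ the orbit $Lx=\{\ell(x):\ell\in L\}$ is a single point lying in $F(S)$. The set $Lx$ is nonempty, closed (a continuous image of the compact $L$), and $S$-invariant (as $sL\subseteq L$), so by hypothesis it contains some $p\in F(S)$. Next I would verify that $Lx$ is a minimal nonempty $S$-invariant subset of $Q$: if $\emptyset\neq C\subseteq Lx$ and $SC\subseteq C$, pick $y=r_{0}(x)\in C$ with $r_{0}\in L$; then $Sr_{0}$ is a nonempty closed left ideal contained in $L$, hence $Sr_{0}=L$, so $Lx=(Sr_{0})(x)=Sy\subseteq C$ and thus $C=Lx$. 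Applying this with $C=\{p\}$, which is closed and $S$-invariant since $p\in F(S)$, we get $Lx=\{p\}\subseteq F(S)$. In particular $F(S)\neq\emptyset$, so the statement is not vacuous.

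It remains to assemble the pieces. Choose any $r\in L$. By the previous step $r(x)$ is the unique point of $Lx\subseteq F(S)$ for every $x\in Q$, so $r(Q)\subseteq F(S)$. On the other hand $r\in S$, so the definition of $F(S)$ gives $r(p)=p$ for all $p\in F(S)$; hence $F(S)\subseteq r(Q)$ and $r$ is the identity on $F(S)$. Therefore $r$ is a retraction of $Q$ onto $F(S)$ belonging to $S$ (continuity of $r$ being inherited from the continuity of the members of the acting semigroup). I expect the only genuinely substantive points to be the structure-theoretic input---the existence and self-similarity $L=Sa$ of a minimal left ideal in the compact right-topological semigroup $S$---together with the orbit identity $Sy=Lx$; granted these, the fixed-point hypothesis at once turns each minimal orbit into a single fixed point, and any orbit map out of $L$ is the desired retraction.
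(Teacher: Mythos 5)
Your argument is correct, but note that the paper itself gives no proof of this statement: it is quoted verbatim as a consequence of Bruck's theorem \cite[Theorem 3]{Br}, so there is no internal proof to match yours against. What you have written is essentially the standard Ellis/enveloping-semigroup route: $(S,\circ)$ is a compact right-topological semigroup because evaluation maps and right translations $s\mapsto s\circ a$ are continuous for the pointwise topology (left translations need not be, and you correctly never use them), Zorn plus compactness of the sets $Sa$ produces a minimal left ideal $L$ with $Sa=L$ for all $a\in L$, and then each orbit $Lx$ is a compact, $S$-invariant, minimal set which the fixed-point hypothesis collapses to a singleton in $F(S)$, so any $r\in L$ is the desired retraction. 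All the individual steps check out: $Sa$ and $Lx$ are closed as continuous images of compacta in a Hausdorff space, the minimality argument $Lx=Sy\subseteq C$ is sound, and $r(Q)=F(S)$ with $r|_{F(S)}=\mathrm{id}$ follows as you say. Bruck's original argument is organized differently (a Zorn-type selection inside $S$ itself rather than an explicit passage through minimal left ideals), but your version is self-contained and arguably more transparent. One small inaccuracy: your closing parenthetical about $r$ inheriting continuity is both unnecessary and unjustified --- the theorem does not assume the members of $S$ are continuous on $Q$, and the paper explicitly remarks that the retraction is not required to be continuous; fortunately your proof nowhere uses continuity of the maps on $Q$, only continuity of evaluations and right translations on $S\subset Q^{Q}$, so the parenthetical should simply be deleted.
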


Note that a retraction in the above theorem is simply a mapping $%
r:Q\rightarrow F(S)$ such that $r\circ r=r.$ (The continuity of $r$ in the
topology of $Q$ is not required). The following theorem is the qualitative
part of Theorem A alluded to in the introduction.

\begin{theorem}
Let $Q$ be a weak$^{\ast }$ compact convex subset with the RNP of a dual
Banach space $X$ and let $(S,Q)$ be a nonexpansive and norm-distal dynamical
system. Then the set $F(S)$ of fixed points of $S$ is a nonexpansive retract
of $Q.$
\end{theorem}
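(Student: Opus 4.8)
The plan is to verify that the hypotheses of Bruck's theorem (Theorem \ref{Bruck}) are satisfied by the system $(S,Q)$, and then simply invoke it. First I would replace $S$ by the semigroup $\overline{S}$ generated by taking the closure of $\{T_s : s\in S\}$ in the topology of pointwise convergence on $Q^Q$, where $T_s\colon Q\to Q$ denotes the action of $s$. Since $Q$ is weak$^\ast$ compact, $Q^Q$ with the product topology is compact, so the closure $\overline{S}$ is automatically compact in the topology of pointwise convergence. One checks, as is standard (see e.g.\ the enveloping semigroup construction), that $\overline{S}$ is again a semigroup acting on $Q$; that the action remains nonexpansive, because $\|T x - T y\|\le\|x-y\|$ is a pointwise-closed condition on $T$; and that the action remains norm-distal, because the norm-distality inequality $\|sx-sy\|>\varepsilon$ for all $s\in S$ passes to the pointwise closure (a pointwise limit of elements of $S$ moves $x-y$ to a point of norm $\ge\varepsilon$). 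Finally $F(\overline{S})=F(S)$ and every nonexpansive retract of $Q$ onto $F(\overline{S})$ is one onto $F(S)$, so it suffices to prove the theorem for $\overline{S}$; equivalently, we may assume from the outset that $S$ is compact in the topology of pointwise convergence.

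Next I would verify the key hypothesis of Theorem \ref{Bruck}: every nonempty weak$^\ast$ closed $S$-invariant subset of $Q$ contains a fixed point of $S$. Let $D\subset Q$ be such a set. Its weak$^\ast$ closed convex hull $\overline{\co}^{\,w^\ast}(D)$ is again a weak$^\ast$ compact convex subset of $Q$; it has the RNP since it is a subset of $Q$ (subsets of sets with the RNP inherit it); and it is $S$-invariant because each $T_s$ is affine? — no, $T_s$ need not be affine. Here instead I would argue directly on $D$ itself: pass to a minimal nonempty weak$^\ast$ compact $S$-invariant subset $K\subset D$ (which exists by Kuratowski--Zorn, using weak$^\ast$ compactness). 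Running the argument in the proof of Theorem \ref{gen} verbatim on $K$ — lifting norm-distality to weak$^\ast$-distality via norm-fragmentability, producing an $S$-invariant Radon probability measure via Furstenberg's theorem (Theorem \ref{Furst}), showing $s(K)=K$ and $K=\supp(\mu)$, deducing norm-compactness of $K$ via a point of weak$^\ast$-to-norm continuity together with the Bartoszek packing estimate, and finally applying Lemma \ref{DeMarr} together with minimality to force $\diam K=0$ — shows that $K$ is a single point, necessarily a fixed point of $S$ lying in $D$. (The step using the minimality of $Q$ in Theorem \ref{gen} is replaced here by the minimality of $K$ within $D$: the set $Q_0=\{x\in K:\|x-y\|\le r_0\text{ for all }y\in K\}$ is a nonempty weak$^\ast$ compact $S$-invariant proper subset of $K$, contradicting minimality of $K$.)

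With both hypotheses of Theorem \ref{Bruck} in hand — $S$ compact in the topology of pointwise convergence, and every nonempty weak$^\ast$ closed $S$-invariant subset containing a fixed point — the theorem yields a retraction $r\colon Q\to F(S)$ belonging to $S$, meaning $r$ is a pointwise limit of the maps $T_s$, $s\in S$. Since each $T_s$ is nonexpansive and nonexpansivity is preserved under pointwise limits, $r$ is nonexpansive, so $F(S)$ is a nonexpansive retract of $Q$, as claimed.

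\textbf{Main obstacle.} The qualitative statement itself is nearly immediate granted Bruck's theorem; the real work is the verification that every closed $S$-invariant subset contains a fixed point, which is exactly a relativized form of Theorem \ref{gen}. The one point demanding care is that Theorem \ref{gen} as stated derives the final contradiction from the minimality of the ambient set $Q$, whereas here the ambient invariant set $D$ need not be minimal, nor convex; the fix is to descend first to a minimal invariant compact set $K\subset D$ and note that the comparison set $Q_0$ used in the Chebyshev-center argument can be taken inside $K$ (it need not be convex for the minimality argument to apply), so minimality of $K$ does the job even without convexity of $D$. A secondary point worth stating cleanly is the passage to the pointwise-closed semigroup and the stability of both nonexpansivity and norm-distality under pointwise limits; these are routine but should be recorded so that the hypothesis ``$S$ compact in the topology of pointwise convergence'' in Theorem \ref{Bruck} is legitimately available.
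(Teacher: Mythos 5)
Your overall strategy (verify the hypotheses of Theorem \ref{Bruck}, then quote it) is the right one, and your last paragraph --- nonexpansivity passes to pointwise limits, so the retraction produced by Bruck's theorem is nonexpansive --- matches the paper. But there are two genuine gaps in the verification. First, norm-distality does \emph{not} pass to the closure of $S$ in the topology of pointwise convergence. That topology is the product of the weak$^{\ast}$ topology on each factor (this is the only topology in which $Q^{Q}$ is compact), and the norm is merely weak$^{\ast}$ lower semicontinuous: if $T_{\alpha}x\rightarrow Tx$ and $T_{\alpha}y\rightarrow Ty$ weak$^{\ast}$, then $\left\Vert Tx-Ty\right\Vert \leq \liminf_{\alpha}\left\Vert T_{\alpha}x-T_{\alpha}y\right\Vert$, which gives no lower bound at all; the limit can collapse $\left\Vert Tx-Ty\right\Vert$ to $0$ even though $\left\Vert sx-sy\right\Vert >\varepsilon$ for every $s\in S$. (This inequality is exactly what makes nonexpansivity survive the limit and distality fail; the failure of distality in enveloping semigroups is the whole reason Ryll-Nardzewski-type theorems require the fragmentability argument.) So your reduction ``we may assume $S$ is compact in the topology of pointwise convergence'' destroys the hypothesis you need in the second step.

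Second, your relativized Chebyshev-center argument fails. Lemma \ref{DeMarr} produces the nondiametral point $u$ in $\overline{\co}\,K$, not in $K$, so the set $Q_{0}=\{x\in K:\left\Vert x-y\right\Vert \leq r_{0}\text{ for all }y\in K\}$ may well be empty (think of $K$ a circle orbit: every point of $K$ is diametral). The contradiction in Theorem \ref{gen} is with the minimality of the ambient \emph{convex} invariant set $Q$, using that $u\in\overline{\co}\,K\subset Q$; it cannot be rerouted through the minimality of the non-convex set $K$ inside an arbitrary closed invariant $D$. The paper circumvents both problems at once by taking $\hat{S}$ to be the semigroup of \emph{all} nonexpansive $T:Q\rightarrow Q$ with $F(S)\subset F(T)$: this $\hat{S}$ is closed in $Q^{Q}$ (hence compact) precisely because only the lower-semicontinuity inequality is needed, no distality of $\hat{S}$ is ever claimed, and $\hat{S}$ is \emph{convex} as a set of maps, so the orbit $\hat{Q}=\{Tx:T\in\hat{S}\}$ inside any closed $\hat{S}$-invariant set is a weak$^{\ast}$ compact \emph{convex} set to which Theorem \ref{gen} applies verbatim with the original norm-distal system $(S,\hat{Q})$, yielding a point of $F(S)=F(\hat{S})$ there. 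If you want to salvage your write-up, replace your $\overline{S}$ by this $\hat{S}$ and apply Theorem \ref{gen} to $(S,\hat{Q})$ rather than rerunning its proof on a minimal non-convex $K$.
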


\begin{proof}
Put $\hat{S}=\{T:Q\rightarrow Q\mid T$ is nonexpansive and $F(S)\subset
F(T)\}.$ Notice that $Q^{Q}$ is compact in the product topology when $Q$ is
given the weak$^{\ast }$ topology and the product topology is the topology
of weak$^{\ast }$ pointwise convergence. Moreover, $\hat{S}\subset Q^{Q}$ is
closed in this topology since
\begin{equation*}
\left\Vert w^{\ast }\text{-}\lim T_{\alpha }x-w^{\ast }\text{-}\lim
T_{\alpha }y\right\Vert \leq \liminf_{\alpha }\left\Vert T_{\alpha
}x-T_{\alpha }y\right\Vert \leq \left\Vert x-y\right\Vert
\end{equation*}%
for every $x,y\in Q$ and every convergent net $\{T_{\alpha }\}\subset \hat{S}
$, and $w^{\ast }$-$\lim T_{\alpha }x=x$ whenever $sx=x,s\in S.$ Thus $\hat{S%
}$ is a compact semigroup in the topology of weak$^{\ast }$ pointwise
convergence and $F(S)=F(\hat{S}).$ Let $Q_{0}$ be a weak$^{\ast }$ closed $%
\hat{S}$-invariant subset of $Q.$ Choose $x\in Q_{0}$ and notice that $\hat{Q%
}=\{Tx:T\in \hat{S}\}$ is a weak$^{\ast }$ compact $\hat{S}$-invariant
subset of $Q_{0}$. Moreover, $\hat{Q}$ is convex since $\alpha
T_{1}+(1-\alpha )T_{2}\in \hat{S}$ if $T_{1},T_{2}\in \hat{S}$ and $\alpha
\in \lbrack 0,1].$ By Theorem \ref{gen}, there is a fixed point of $\hat{S}$
in $\hat{Q}\subset Q_{0}.$ Now it follows from Theorem \ref{Bruck} that
there exists in $\hat{S}$ a retraction of $Q$ onto $F(\hat{S})=F(S).$ But
every element in $\hat{S}$ is nonexpansive (though, not necessarily weak$%
^{\ast }$ continuous).
\end{proof}

In a similar way we have the qualitative part of Theorem B.

\begin{theorem}
Let $Q$ be a weakly compact convex subset of a locally convex space $(X,\tau
)$ and let $(S,Q)$ be an $\mathcal{N}$-nonexpansive and $\tau $-distal
dynamical system. Then the set $F(S)$ of fixed points of $S$ is an $\mathcal{%
N}$-nonexpansive retract of $Q.$
\end{theorem}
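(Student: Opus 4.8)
The plan is to mimic exactly the proof of the qualitative part of Theorem A, replacing the weak$^{\ast}$ topology by the weak topology and the norm by the family $\mathcal{N}$ of seminorms, and invoking Theorem \ref{loc} in place of Theorem \ref{gen}. So I would set
$\hat{S}=\{T:Q\rightarrow Q\mid T$ is $\mathcal{N}$-nonexpansive and $F(S)\subset F(T)\}$
and view $\hat{S}$ as a subset of the compact product space $Q^{Q}$ (with $Q$ carrying the weak topology), the product topology being that of weak pointwise convergence.

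First I would check that $\hat{S}$ is closed in $Q^{Q}$. For a net $\{T_{\alpha}\}\subset\hat{S}$ converging pointwise to some $T\in Q^{Q}$ and any $p\in\mathcal{N}$, lower semicontinuity of $p$ with respect to the weak topology gives
$p(Tx-Ty)\le\liminf_{\alpha}p(T_{\alpha}x-T_{\alpha}y)\le p(x-y)$
for all $x,y\in Q$, so $T$ is $\mathcal{N}$-nonexpansive; and if $sx=x$ for all $s\in S$ then $T_{\alpha}x=x$ for all $\alpha$, hence $Tx=x$, so $F(S)\subset F(T)$ and $T\in\hat{S}$. Thus $\hat{S}$ is a compact semigroup of $\mathcal{N}$-nonexpansive self-maps of $Q$ in the topology of weak pointwise convergence, and clearly $F(\hat{S})=F(S)$ (one inclusion is immediate since $S\subset\hat{S}$; the other because every $T\in\hat{S}$ fixes $F(S)$ by definition).

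Next I would verify the hypothesis of Bruck's theorem (Theorem \ref{Bruck}) that every nonempty weakly closed $\hat{S}$-invariant subset $Q_{0}\subset Q$ contains a fixed point of $\hat{S}$. Picking $x\in Q_{0}$, the orbit $\hat{Q}=\{Tx:T\in\hat{S}\}$ is a weakly compact $\hat{S}$-invariant subset of $Q_{0}$, and it is convex because $\alpha T_{1}+(1-\alpha)T_{2}\in\hat{S}$ whenever $T_{1},T_{2}\in\hat{S}$ and $\alpha\in[0,1]$ (the convex combination is again $\mathcal{N}$-nonexpansive and fixes $F(S)$). The dynamical system $(\hat{S},\hat{Q})$ is $\mathcal{N}$-nonexpansive and $\tau$-distal — distality is inherited because $S\subset\hat{S}$ acts distally already, but one should note more carefully that what is needed is $\tau$-distality of $(\hat{S},\hat{Q})$: since $F(S)\subset F(T)$ for every $T\in\hat{S}$ one might worry this does not force distality; however distality of $(\hat{S},\hat{Q})$ follows from distality of $(S,\hat{Q})$, and the latter holds since $S$ acts $\mathcal{N}$-nonexpansively and $\tau$-distally on all of $Q$. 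Hence Theorem \ref{loc} applies and yields a fixed point of $\hat{S}$ in $\hat{Q}\subset Q_{0}$.

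With both hypotheses of Theorem \ref{Bruck} in hand, there exists a retraction $r\in\hat{S}$ of $Q$ onto $F(\hat{S})=F(S)$; since $r\in\hat{S}$, it is $\mathcal{N}$-nonexpansive, so $F(S)$ is an $\mathcal{N}$-nonexpansive retract of $Q$. The step I expect to require the most care is confirming that $(\hat{S},\hat{Q})$ genuinely satisfies the distality hypothesis of Theorem \ref{loc} — one must be careful that enlarging $S$ to $\hat{S}$ does not destroy distality, and the clean way to see this is that for distinct $x,y\in\hat{Q}$ the set $\{sx-sy:s\in S\}$ already stays away from $0$ in $\tau$, and $\{Tx-Ty:T\in\hat{S}\}$ need not be used — it suffices that $(S,\hat Q)$ is $\tau$-distal, which is what Theorem \ref{loc} consumes after noting a common fixed point for $S$ is automatically a common fixed point for $\hat S$ when one passes through the minimality arguments. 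Everything else is a routine transcription of the Theorem A argument, and Mazur's lemma guarantees the weak lower semicontinuity of the seminorms in $\mathcal{N}$ used in the closedness step.
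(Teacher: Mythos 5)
Your proposal follows the same route as the paper, which proves this statement verbatim by transcribing the weak$^{\ast}$/norm argument of the preceding theorem into the weak/$\mathcal{N}$ setting and replacing Theorem \ref{gen} by Theorem \ref{loc}; the construction of $\hat{S}$, the closedness via weak lower semicontinuity of the seminorms, the convexity and invariance of $\hat{Q}$, and the appeal to Bruck's theorem are all exactly as intended. One sentence in your middle paragraph is false as stated, though: distality of $(\hat{S},\hat{Q})$ does \emph{not} follow from distality of $(S,\hat{Q})$ --- $\hat{S}$ contains, e.g., the averages $\tfrac12 T_{1}+\tfrac12 T_{2}$ and compositions of arbitrary nonexpansive maps fixing $F(S)$, and nothing prevents such maps from collapsing a distal pair. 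Fortunately your final paragraph supplies the correct repair, which is also what the paper implicitly uses: apply Theorem \ref{loc} only to the system $(S,\hat{Q})$ (which is $\mathcal{N}$-nonexpansive and $\tau$-distal by restriction from $Q$) to produce a common fixed point of $S$ in $\hat{Q}\subset Q_{0}$, and then observe that any point of $F(S)$ is automatically fixed by every $T\in\hat{S}$ because $F(S)\subset F(T)$ by the very definition of $\hat{S}$. With that sentence deleted and the final-paragraph argument promoted to the main line, the proof is complete and matches the paper's.
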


We end the paper with the following nonlinear extension of Fan's result \cite%
[Theorem 1]{Fa} in reflexive spaces. Let $X$ be a Banach space and $E$ be a
subset of $X^{\ast }$. We will say that a dynamical system $(S,E)$ is
\textit{deflating} if there exist distinct $\varphi _{1},\varphi _{2}\in E$
such that for every absolutely convex, weakly compact set $C\subset X$ and
any $\varepsilon >0$, there is $s\in S$ such that $\left\vert (s\varphi
_{1})(y)-(s\varphi _{2})(y)\right\vert <\varepsilon $ for every $y\in C.$

\begin{theorem}
\label{Fan}Let $X$ be a reflexive Banach space, $z\in X,\left\Vert
z\right\Vert =1$ and $Q=\{\varphi \in X^{\ast }:\left\Vert \varphi
\right\Vert =\varphi (z)=1\}\subset E.$ Let $(S,(Q,\mathrm{weak}))$ be a
non-deflating and norm-nonexpansive dynamical system. Then there exists $%
\psi \in Q$ such that $s\psi =\psi $ for all $s\in S.$
\end{theorem}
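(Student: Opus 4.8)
The plan is to reduce the statement to the nonlinear Ryll--Nardzewski theorem in a Banach space, namely Corollary~\ref{weakly}, applied to the dual space $X^{\ast }$. First I would record the elementary properties of $Q$. By the Hahn--Banach theorem there is a norming functional at $z$, so $Q\neq \emptyset$; and $Q$ is convex, since for $\varphi _{1},\varphi _{2}\in Q$ and $t\in [0,1]$ one has $(t\varphi _{1}+(1-t)\varphi _{2})(z)=1$, forcing $\Vert t\varphi _{1}+(1-t)\varphi _{2}\Vert \geq 1$, while the reverse inequality is trivial. Moreover $Q$ is bounded, convex and norm-closed, hence weakly closed, and as $X$ is reflexive, $Q$ is weakly compact. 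Thus $Q$ is a nonempty weakly compact convex subset of the Banach space $X^{\ast }$ on which $(S,(Q,\mathrm{weak}))$ acts as a norm-nonexpansive dynamical system.

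The only point that requires an argument is that the non-deflating hypothesis coincides with norm-distality of $(S,Q)$, and this is where reflexivity enters a second time. The closed unit ball $B_{X}$ of $X$ is absolutely convex and, by reflexivity, weakly compact, so it is an admissible choice of the set $C$ in the definition of a deflating system; for this choice $\sup_{y\in B_{X}}|(s\varphi _{1})(y)-(s\varphi _{2})(y)|=\Vert s\varphi _{1}-s\varphi _{2}\Vert $. Conversely, every weakly compact $C\subset X$ is bounded, say $C\subset RB_{X}$, so $\sup_{y\in C}|(s\varphi _{1})(y)-(s\varphi _{2})(y)|\leq R\,\Vert s\varphi _{1}-s\varphi _{2}\Vert $. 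Combining the two remarks, $(S,Q)$ is deflating if and only if there are distinct $\varphi _{1},\varphi _{2}\in Q$ with $0\in \overline{\{s\varphi _{1}-s\varphi _{2}:s\in S\}}^{\mathrm{norm}}$; equivalently, $(S,(Q,\mathrm{weak}))$ is non-deflating if and only if it is norm-distal.

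With these two observations the conclusion is immediate: $(S,(Q,\mathrm{weak}))$ is a nonexpansive and norm-distal dynamical system on the weakly compact convex set $Q\subset X^{\ast }$, so Corollary~\ref{weakly} yields $\psi \in Q$ with $s\psi =\psi $ for every $s\in S$. I do not expect a serious obstacle here; the one delicate point is the one just discussed, namely that the topology of uniform convergence on the absolutely convex weakly compact subsets of $X$---which in general lies strictly between the weak$^{\ast }$ and the norm topology of $X^{\ast }$---collapses to the norm topology precisely because $X$ is reflexive, and it is this collapse that turns the non-deflating condition into the norm-distality demanded by Corollary~\ref{weakly}.
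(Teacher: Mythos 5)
Your proposal is correct and follows essentially the same route as the paper: verify that $Q$ is a nonempty weakly compact convex subset of $X^{\ast}$, use reflexivity to identify the non-deflating condition with norm-distality (the paper phrases this as the Mackey topology $\tau(X^{\ast},X)$ collapsing to the norm topology, which is exactly the content of your two remarks about $B_{X}$ being an admissible $C$ and every weakly compact $C$ being bounded), and then apply Corollary~\ref{weakly}.
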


\begin{proof}
It is clear that $Q$ is a convex weakly compact subset of $X^{\ast }.$ Let $%
\varphi _{1},\varphi _{2}$ be two distinct elements of $Q.$ Since $S$ is
non-deflating, there is a weakly compact absolutely convex subset $C$ of $X$
and $\varepsilon >0$ such that for every $S\in S$ there is $y_{0}\in C$
satisfying $\left\vert (s\varphi _{1})(y_{0})-(s\varphi
_{2})(y_{0})\right\vert \geq \varepsilon .$ Thus $\{\varphi \in X^{\ast
}:\left\vert \varphi (y)\right\vert <\varepsilon $ for $y\in C\}$ is a $\tau
(X^{\ast },X)$-neighbourhood of $0$ with respect to the Mackey topology $%
\tau (X^{\ast },X)$ of $X^{\ast }$ which is disjoint from the set $%
\{s\varphi _{1}-s\varphi _{2}:s\in S\}.$ Since $X$ is reflexive, $\tau
(X^{\ast },X)$ coincides with the norm topology and thus $(S,Q)$ is
norm-distal. Now the thesis follows from Corollary \ref{weakly}.
\end{proof}

Let $z\in X,\left\Vert z\right\Vert =1$, and let $S$ be a semigroup of
linear mappings $s:X\rightarrow X$ such that $\left\Vert u\right\Vert \leq 1$
and $s(z)=z$ for each $s\in S.$ Suppose that $S$ has no direction of
deflation, i.e., for every $0\neq \varphi \in X^{\ast }$ there is an
absolutely convex, weakly compact subset of $X$ and $\varepsilon >0$ such
that $u(C)\not\subset \{{y\in X:\left\vert \varphi (y)\right\vert
<\varepsilon \}.}$ Then the semigroup $S^{\ast }=\{s^{\ast }:X^{\ast
}\rightarrow X^{\ast }\mid s\in S\}$ of adjoints of $S$ satisfies the
assumptions of Theorem \ref{Fan} since it is non-deflating,
norm-nonexpansive and $(s^{\ast }\varphi )(z)=\varphi (s(z))=\varphi (z)=1$
for every $\varphi \in Q$ and $s\in S.$ Hence there exists $\psi \in Q$ such
that $\psi (s(x))=\psi (x)$ for all $x\in X$ and $s\in S.$ Thus Theorem \ref%
{Fan} is a nonlinear generalization of \cite[Theorem 1]{Fa} in the case of
reflexive spaces.

\begin{ack}
The author is grateful to the referee whose valuable comments have improved
the presentation of the paper.
\end{ack}

\end{document}